\renewcommand{\bar}[1]{\overline{#1}}
\renewcommand{\ker}{\textnormal{ker}}
\newcommand{\im}{\textnormal{im}}
\newcommand{\coker}{\textnormal{coker}}
\newcommand{\A}{\mathbb{A}}
\newcommand{\C}{\mathbb{C}}
\newcommand{\R}{\mathbb{R}}
\newcommand{\rk}{\textnormal{rk} \,}
\newcommand{\prim}{{\mathrm{P} \hspace{-0.3mm}}}
\DeclareMathOperator{\DR}{DR}
\author{Nicolas Martin}
\address{Centre de math\'ematiques Laurent Schwartz, \'Ecole polytechnique, Universit\'e Paris-Saclay,
F-91128 Palaiseau cedex, France}
\email{nicolas.martin@polytechnique.edu}
\title{Behaviour of some Hodge invariants by middle convolution}
\begin{document}

\frontmatter

\begin{abstract}
Following a paper of Dettweiler and Sabbah, this article studies the behaviour of various Hodge invariants by middle additive convolution with a Kummer module. The main result gives the behaviour of Hodge numerical data at infinity. We also give expressions for Hodge numbers and degrees of some Hodge bundles without making the hypothesis of scalar monodromy at infinity, which generalizes the results of Dettweiler and Sabbah.
\end{abstract}

\begin{altabstract}
\noindent
--- Suivant les travaux de Dettweiler et Sabbah, cet article s'int\'eresse au comportement d'invariants de Hodge par convolution interm\'ediaire additive par un module de Kummer. Le r\'esultat principal pr\'ecise le comportement de donn\'ees num\'eriques de Hodge \`a l'infini. Nous explicitons \'egalement le comportement des nombres de Hodge et des degr\'es de certains fibr\'es de Hodge sans faire l'hypoth\`ese de monodromie scalaire \`a l'infini, g\'en\'eralisant ainsi les r\'esultats de Dettweiler et Sabbah.
\end{altabstract}

\subjclass{14D07, 32G20, 32S40}
\keywords{D-modules, middle convolution, rigid local system, Katz algorithm, Hodge theory}
\maketitle
\mainmatter

The initial motivation to study the behaviour of various Hodge invariants by middle additive convolution is Katz's algorithm \cite{Kat96}, which makes it possible to reduce a rigid irreducible local system $\mathscr{L}$ on a punctured projective line to a rank-one local system. This algorithm is a successive application of tensor products with a rank-one local system and middle additive convolutions with a Kummer local system, and terminates with a rank-one local system. We assume that
the monodromy at infinity of $\mathscr{L}$ is scalar, so this property is preserved throughout the algorithm.

\newpage

If we assume that the eigenvalues of the local monodromies of $\mathscr{L}$ have absolute value one, such a local system underlies a variation of polarized complex Hodge structure unique up to a shift of the Hodge filtration \cite{Sim90,Del87}, and this property is preserved at each step of Katz's algorithm. The work of Dettweiler and Sabbah \cite{DS13} is devoted to computing the behaviour of Hodge invariants at each step of the algorithm.

\medskip

Our purpose in this article is to complement the previous work of Dett\-weiler and Sabbah without assuming that the monodromy at infinity is scalar, and to do that, we take up the notation introduced in \cite[\S2.2]{DS13} and recalled in {\S}\ref{11}. More precisely, our main result consists in making explicit the behaviour of the nearby cycle local Hodge numerical data at infinity by middle additive convolution with the Kummer module $\mathcal{K}_{\lambda_0}$. Considering a regular holonomic $\mathscr{D}_{\A^1}$-module $M$ verifying various assumptions, whose singularities at finite distance belong to $\bm{x}=\{x_1,\hdots,x_r\}$, we denote by $\textnormal{MC}_{\lambda_0}(M)=M \: {*}_{\textnormal{mid}} \: \mathcal{K}_{\lambda_0}$ this convolution and show the following theorem (see {\S}\ref{11} for the notation and assumptions). In the following, we set $\gamma_0 \in (0,1)$ such that $\exp(-2i\pi\gamma_0)=\lambda_0 \neq 1$.

\medskip

\begin{theo}\label{theo1}
Let $\mathscr{M}^{\textnormal{min}}$ be the $\mathscr{D}_{\mathbb{P}^1}$-module minimal extension of $M$ at infinity. Given $\gamma \in [0,1)$ and $\lambda=\exp(-2i\pi\gamma)$, we have:
$$\nu_{\infty,\lambda,\ell}^p(\textnormal{MC}_{\lambda_0}(M))=\left\{
\begin{array}{cl}
  \nu_{\infty,\lambda\lambda_0,\ell}^{p-1}(M) \quad &\textnormal{if } \gamma \in (0,1-\gamma_0) \\[1mm]
  \nu_{\infty,\lambda\lambda_0,\ell}^p(M) \quad &\textnormal{if } \gamma \in (1-\gamma_0,1) \\[1mm]
	\nu_{\infty,\lambda_0,\ell+1}^p(M) \quad &\textnormal{if } \lambda=1 \\[1mm]
  \nu_{\infty,1,\ell-1}^{p-1}(M) \quad &\textnormal{if } \lambda=\bar{\lambda_0}, \ \ell \geq 1 \\[1mm]
	h^p H^1(\mathbb{P}^1,\DR\mathscr{M}^{\textnormal{min}}) \quad &\textnormal{if } \lambda=\bar{\lambda_0}, \ \ell = 0.
\end{array}
\right.$$
\end{theo}

\medskip

This result has applications beyond Katz's algorithm since it enables us to give another proof of a theorem of Fedorov \cite{Fed18}, which completely determines the Hodge numbers of the variations of Hodge structures corresponding to hypergeometric differential equations; this work is developed in \cite{Mar21}.

\medskip

In addition, we get general expressions for Hodge numbers $h^p$ of the variation and degrees $\delta^p$ of some Hodge bundles (recalled in {\S}\ref{11}) which generalize those of Dettweiler and Sabbah. The results are the following.

\medskip

\begin{theo}\label{theo2}
The local invariants $h^p(\textnormal{MC}_{\lambda_0}(M))$ are given by:
\begin{multline*}
h^p(\textnormal{MC}_{\lambda_0}(M))= \\
\sum_{\gamma \in [0,\gamma_0)} \nu_{\infty,\lambda}^{p}(M) + \sum_{\gamma \in [\gamma_0,1)} \nu_{\infty,\lambda}^{p-1}(M) + h^p H^1(\mathbb{A}^1,\DR M) - \nu_{\infty,\lambda_0,\textnormal{prim}}^{p-1}(M).
\end{multline*}
\end{theo}

\medskip

\begin{theo}\label{theo3}
The global invariants $\delta^p(\textnormal{MC}_{\lambda_0}(M))$ are given by:
\begin{multline*}
\delta^p(\textnormal{MC}_{\lambda_0}(M)) = \\
\delta^p(M) + \sum_{\gamma \in [\gamma_0,1)} \nu_{\infty,\lambda}^{p}(M) - \sum_{i=1}^r \bigg( \mu_{x_i,1}^p(M) + \sum_{\gamma \in (0,1-\gamma_0)} \mu_{x_i,\lambda}^{p-1}(M) \bigg).
\end{multline*}
\end{theo}

\medskip

\section{Hodge numerical data and modules of normal crossing type}

\numberwithin{equation}{section}

\vspace{0.1cm}

\subsection{Hodge invariants}

\label{11}

\medskip

In this section, we recall the definition of local and global invariants introduced in \cite[{\S}2.2]{DS13} (all references to \cite{DS13} are made to the published paper). Let $\Delta$ be a disc centered at 0 with coordinate $t$ and let $(V,F^{\bullet}V,\nabla)$ be a variation of polarizable Hodge structure on $\Delta^*=\Delta \setminus \{0\}$ of weight $0$. We denote by $M$ the corresponding $\mathscr{D}_\Delta$-module minimal extension at $0$.

\bigskip

\noindent
\textbf{Nearby cycles.} For $a \in (-1,0]$ and $\lambda=e^{-2i\pi a}$, the nearby cycle space at the origin $\psi_\lambda(M)$ is equipped with the nilpotent endomorphism $\mathrm{N}=-2i\pi(t \partial_t-a)$ and the Hodge filtration is such that $\mathrm{N} F^p\psi_\lambda(M) \subset F^{p-1}\psi_\lambda(M)$. The mono\-dromy filtration induced by $\mathrm{N}$ enables us to define the spaces $\prim_\ell \psi_\lambda(M)$ of primitive vectors, equipped with a polarizable Hodge structure (see \cite[{\S}3.1.a]{SS19} for more details). The nearby cycle local Hodge numerical data are defined by

\vspace{-0.1cm}

$$\nu_{\lambda,\ell}^p(M):=h^p(\prim_\ell \psi_\lambda(M))=\dim \textnormal{gr}_F^p \prim_\ell \psi_\lambda(M),$$

\medskip

\noindent
with the relation $\nu_{\lambda}^p(M):=h^p \psi_\lambda(M)=\sum\limits_{\ell \geq 0}\sum\limits_{k=0}^\ell \nu_{\lambda,\ell}^{p+k}(M)$. We set

\medskip

$$\nu_{\lambda,\textnormal{prim}}^p(M):=\sum\limits_{\ell \geq 0} \nu_{\lambda,\ell}^p(M) \ \textnormal{ and } \ \nu_{\lambda,\textnormal{coprim}}^p(M):=\sum\limits_{\ell \geq 0} \nu_{\lambda,\ell}^{p+\ell}(M).$$

\bigskip

\noindent
\textbf{Vanishing cycles.} For $\lambda \neq 1$, the vanishing cycle space at the origin is given by $\phi_\lambda(M)=\psi_\lambda(M)$ and comes with $\textrm{N}$ and $F^p$ as before. For $\lambda=1$, the Hodge filtration on $\phi_1(M)$ is such that $F^p \prim_\ell \phi_1(M)=\mathrm{N}(F^p \prim_{\ell+1} \psi_1(M))$. Similarly to nearby cycles, the vanishing cycle local Hodge numerical data is defined by

\vspace{-0.1cm}

$$\mu_{\lambda,\ell}^p(M):=h^p(\prim_\ell \phi_\lambda(M))=\dim \textnormal{gr}_F^p \prim_\ell \phi_\lambda(M).$$

\bigskip

\noindent
\textbf{Degrees $\delta^p$.} For a variation of polarizable Hodge structure $(V,F^\bullet V,\nabla)$ on $\A^1 \setminus \bm{x}$, we denote by $M$ the underlying $\mathscr{D}_{\A^1}$-module minimal extension at each point of $\bm{x}$. The Deligne extension $V^0$ of $(V,\nabla)$ on $\mathbb{P}^1$ is contained in $M$, and we set

\vspace{-0.5cm}

$$\delta^p(M)=\deg \textnormal{gr}_F^p V^0.$$

\medskip

In this paper, we are mostly interested in the behaviour of the nearby cycle local Hodge numerical data at infinity by middle convolution with the Kummer module $\mathcal{K}_{\lambda_0}=\mathscr{D}_{\A^1}/\mathscr{D}_{\A^1} \cdot (t\partial_t-\gamma_0)$, with $\gamma_0 \in (0,1)$ such that $\exp(-2i\pi\gamma_0)=\lambda_0$. This operation is denoted by $\textnormal{MC}_{\lambda_0}$. Note that $\mathcal{K}_{\lambda_0}$ is equipped with the trivial Hodge filtration with jump at zero : $F^p \mathcal{K}_{\lambda_0}=\mathcal{K}_{\lambda_0}$ for $p \leq 0$ and $F^p \mathcal{K}_{\lambda_0}=0$ for $p \geq 1$.

\bigskip

\noindent
\textbf{Assumptions.} As in \cite[Assumption 1.2.2(1)]{DS13}, we assume in what follows that $M$ is an irredu\-cible regular holonomic $\mathscr{D}_{\A^1}$-module, not isomorphic to $(\C[t],\textnormal{d})$ and not supported on a point.

\medskip

\subsection{Modules of normal crossing type}

Let us consider $X$ a polydisc in $\C^n$ with analytic coordinates $x_1,...,x_n$, $D$ the divisor $\{x_1 \cdots x_n=0\}$ and $M$ a coherent $\mathscr{D}_X$-module of normal crossing type (notion defined in \cite[{\S}3.2]{Sai90}). For every $\alpha =(\alpha_1,...,\alpha_n) \in \R^n$, we define the sub-object $M_\alpha=\bigcap_{i=1}^n \bigcup_{k_i \geq 0} \ker(x_i \partial_{x_i}-\alpha_i)^{k_i}$ of $M$. There exists a finite set $A \subset [-1,0)^n$ such that $M_\alpha =0$ for $\alpha \not\in A+\mathbb{Z}^n$. If we set $M^\textnormal{alg}:=\oplus_\alpha M_\alpha$, the natural morphism $M^\textnormal{alg} \otimes_{\C[x_1,...,x_n]\langle \partial_{x_1},...,\partial_{x_n}\rangle} \mathscr{D}_X \rightarrow M$ is an isomorphism.

\bigskip

To be precise, only the case $n=2$ will occur in this paper. In the situations that we will consider, it will be possible to make the above decomposition explicit and then apply the general theory of Hodge modules of M. Saito. For a complete review of the 6 operations formalism for $\mathscr{D}$-modules, see \cite{Meb89}.

\medskip

\section{Proof of Theorem \ref{theo1}}

\label{section2}

\medskip

\noindent
\textbf{Steps of the proof.} Let us begin to list the different steps of the proof:

\smallskip

\begin{enumerate}

\item We write the middle convolution $\textnormal{MC}_{\lambda_0}(M)$ as an intermediate direct image by the sum map. By changing coordinates and projectivizing, we can consider the case of a proper projection.

\item We use a property of commutation between nearby cycles and projective direct image in the theory of Hodge modules, in order to carry out the local study of a nearby cycle sheaf.

\item To be in a normal crossing situation and use the results of the theory of Hodge modules, we perform a blow-up and make completely explicit the nearby cycle sheaf previously introduced (Lemma \ref{explicite}).

\item We take the monodromy and the Hodge filtrations into account, using the degeneration at $E_1$ of the Hodge to de Rham spectral sequence and the Riemann-Roch theorem (following \cite{DS13}) to get the expected theorem.

\end{enumerate}

\medskip

\noindent
\textbf{Geometric situation.} Let $s:\A^1_x \times \A^1_y \rightarrow \A^1_t$ be the sum map. We can change the coordinates so that $s$ becomes the projection onto the second factor and projectivize to get $\widetilde{s}:\mathbb{P}^1_x \times \mathbb{P}^1_t \rightarrow \mathbb{P}^1_t$. We set $x'=1/x$ and $t'=1/t$ as coordinates on a neighbourhood of $(\infty,\infty) \in \mathbb{P}^1_x \times \mathbb{P}^1_t$, $M_{\lambda_0}=M \boxtimes \mathcal{K}_{\lambda_0}$ and $\mathscr{M}_{\lambda_0}=(M_{\lambda_0})_{\textnormal{min}(x'=0)}$ the minimal extension of $M_{\lambda_0}$ along the divisor $\{x'=0\}$. A reasoning similar to that of \cite[Prop 1.1.10]{DS13} gives $\textnormal{MC}_{\lambda_0}(M)=\widetilde{s}_+ \mathscr{M}_{\lambda_0}$. With some abuse of notation, we still denote by $M$ the push-forward in the sense of $\mathscr{D}$-modules by the inclusion $\A^1_x \hookrightarrow \mathbb{P}^1_x$ and we denote by $M'$ its restriction to the affine chart centered at $\infty$. A similar abuse of notation is made for $M_{\lambda_0}$.

\medskip

Let us specify the geometric situation that we will consider in the following, in which we blow up the point $(\infty,\infty)$ in $\mathbb{P}^1_x \times \mathbb{P}^1_t$. We set $X=\textnormal{Bl}_{(\infty,\infty)}(\mathbb{P}^1_x \times \mathbb{P}^1_t)$, $e : X \rightarrow \mathbb{P}^1_x \times \mathbb{P}^1_t$ and $j:\A^1_x \times \A^1_t \hookrightarrow X$ the natural inclusion. There are two charts of the blow up : one given by coordinates $(u_1,v_1) \mapsto (t'=u_1 v_1,x'=v_1)$ and the other one by $(u_2,v_2) \mapsto (t'=v_2,x'=u_2 v_2)$. The strict transform of the line $\{t'=0\}$ is called $\mathbb{P}_x^1$, and the exceptional divisor is called $\mathbb{P}_{\textnormal{exc}}^1$. We denote by $0 \in \mathbb{P}_{\textnormal{exc}}^1$ the point given by $u_2=0$, $1 \in \mathbb{P}_{\textnormal{exc}}^1$ the point given by $u_2=1$ and $\infty \in \mathbb{P}_x^1 \cap \mathbb{P}_{\textnormal{exc}}^1$. We have the following picture:

\vspace{0.3cm}

\includegraphics[scale=0.43]{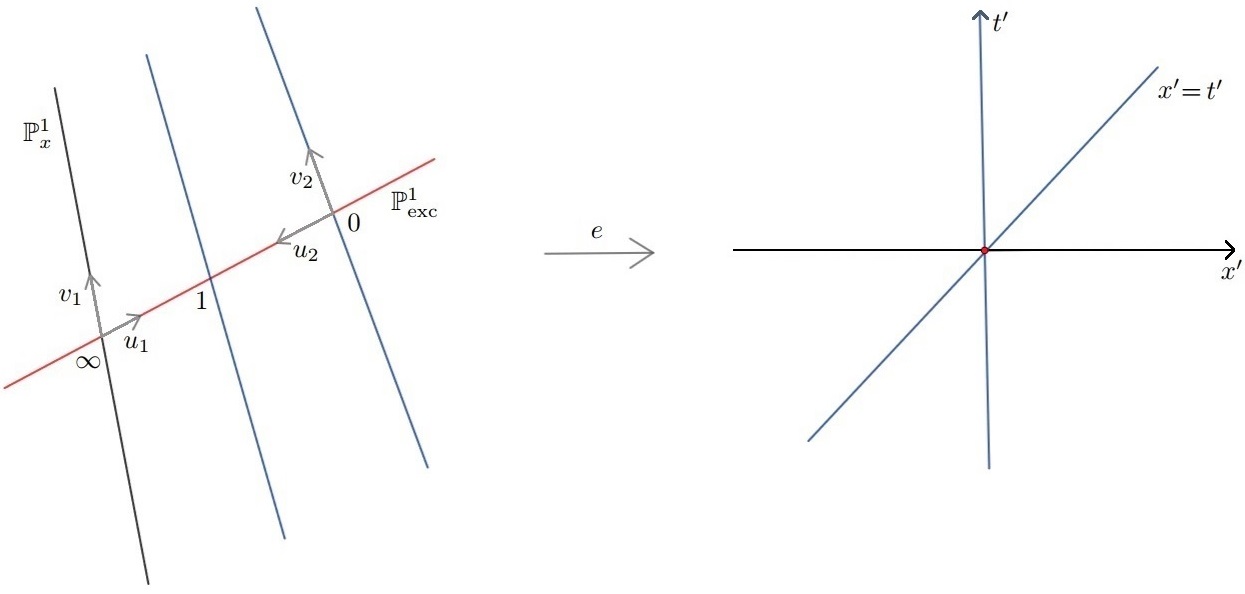}

\vspace{0.2cm}

\noindent
On $\mathbb{A}^1_x \times \mathbb{A}^1_t$, we have

\vspace{-0.5cm}

\begin{align*}
M_{\lambda_0}=M \boxtimes \mathcal{K}_{\lambda_0} &= M[t] \otimes \left( \C[x,t,(t-x)^{-1}],\textnormal{d}_{(x,t)}+\gamma_0 \frac{\textnormal{d}(t-x)}{t-x} \right) \\[-1mm]
&= \left( M[t,(t-x)^{-1}],\nabla_{(x,t)}+\gamma_0 \frac{\textnormal{d}(t-x)}{t-x} \right).
\end{align*}

\noindent
On the affine chart centered at $(\infty,\infty)$, we have

\vspace{-0.4cm}

$$M_{\lambda_0}= \left( M'[t',t'^{-1},(x'-t')^{-1}], \nabla_{(x',t')}+\gamma_0 \left(-\frac{\textnormal{d}x'}{x'}-\frac{\textnormal{d}t'}{t'}+\frac{\textnormal{d}(x'-t')}{x'-t'}\right) \! \right) \! .$$

\medskip

\begin{enonce}{Notation} \em Let us set $N_{\lambda_0}=e^+ M_{\lambda_0}$, $\mathscr{N}_{\lambda_0}=(N_{\lambda_0})_{\textnormal{min}(x' \circ e=0)}$ and $T^\lambda = \psi_{t' \circ e, \lambda}\mathscr{N}_{\lambda_0}$ equipped with a nilpotent endomorphim denoted by $\mathrm{N}$.\em
\end{enonce}

\begin{lemm} $\mathscr{M}_{\lambda_0}[t'^{-1}]=e_+\mathscr{N}_{\lambda_0}[t'^{-1}]$.
\end{lemm}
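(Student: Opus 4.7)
The key observation is that $e : X \rightarrow Y := \mathbb{P}^1_x \times \mathbb{P}^1_t$ is an isomorphism outside the point $(\infty,\infty)$, and that this point lies on the divisor $\{t'=0\} = \mathbb{P}^1_x \times \{\infty_t\}$. Localizing at $t'^{-1}$ allows poles along this divisor, so both sides should be controlled by their restrictions to $V := Y \setminus \{t'=0\} = \mathbb{P}^1_x \times \mathbb{A}^1_t$, where $e$ is already an isomorphism. The proof will formalize this heuristic.

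I would proceed as follows. Let $j : V \hookrightarrow Y$ and $\tilde{j} : e^{-1}(V) \hookrightarrow X$ denote the open immersions, so that $e' := e|_{e^{-1}(V)} : e^{-1}(V) \rightarrow V$ is an isomorphism. For any regular holonomic $\mathscr{D}_Y$-module $\mathscr{F}$, one has $\mathscr{F}[t'^{-1}] = j_+ j^+ \mathscr{F}$, and analogously on $X$ for $(t' \circ e)^{-1}$. Since $e$ is proper, localization commutes with $e_+$, and proper base change in the cartesian square formed by $e, j, e', \tilde{j}$ yields
\[
e_+ \mathscr{N}_{\lambda_0}[t'^{-1}] = j_+ j^+ e_+ \mathscr{N}_{\lambda_0} = j_+ (e')_+ \tilde{j}^+ \mathscr{N}_{\lambda_0},
\]
while $\mathscr{M}_{\lambda_0}[t'^{-1}] = j_+ j^+ \mathscr{M}_{\lambda_0}$. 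It then suffices to identify $\tilde{j}^+ \mathscr{N}_{\lambda_0}$ with $(e')^+ j^+ \mathscr{M}_{\lambda_0}$ as $\mathscr{D}$-modules on $e^{-1}(V)$.

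For this last identification, I would use that the minimal extension functor commutes with restriction to an open subset and with pullback along an isomorphism. Applied to $\mathscr{N}_{\lambda_0} = (e^+ M_{\lambda_0})_{\textnormal{min}(x' \circ e = 0)}$ and to $\mathscr{M}_{\lambda_0} = (M_{\lambda_0})_{\textnormal{min}(x'=0)}$, both sides reduce to the minimal extension along the divisor $\{x' \circ e = 0\} \cap e^{-1}(V)$ of the common module $\tilde{j}^+ e^+ M_{\lambda_0} = (e')^+ j^+ M_{\lambda_0}$, which gives the lemma.

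The main obstacle is technical rather than conceptual: one must verify the two compatibility properties of the minimal extension functor used above (stability under open restriction and pullback by an isomorphism), together with proper base change and the commutation of localization with proper direct image. All of these are standard facts; the geometric content — that the blow-up is invisible once the divisor $\{t'=0\}$ is removed — is essentially the entire argument.
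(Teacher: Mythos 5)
Your argument is correct, and it takes a genuinely different route from the paper's. The paper works concretely: it shows that $e_+\mathscr{N}_{\lambda_0}[t'^{-1}]$ and $\mathscr{M}_{\lambda_0}[t'^{-1}]$ both embed as submodules of $M_{\lambda_0}$ (the $H^i$ for $i\neq 0$ and the kernel of the comparison map to $M_{\lambda_0}$ are supported at $(\infty,\infty)\subset\{t'=0\}$, hence die after inverting $t'$), and then compares the two submodules via their intersection, whose complementary pieces are again supported at $(\infty,\infty)$ and so vanish. You instead express the localization as $j_+j^+$ for the open complement $V$ of $\{t'=0\}$, invoke base change in the cartesian square built from $e$ and $j$ to push $e_+$ past $j^+$, and then use that $e$ restricts to an isomorphism over $V$ together with the locality of the minimal extension functor (under open restriction and pullback by an isomorphism) to identify $\tilde{j}^+\mathscr{N}_{\lambda_0}$ with $(e')^+j^+\mathscr{M}_{\lambda_0}$, whence $j_+(e')_+\tilde{j}^+\mathscr{N}_{\lambda_0}=j_+j^+\mathscr{M}_{\lambda_0}$. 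Your version trades the paper's hands-on submodule comparison for standard functoriality lemmas and is somewhat cleaner. The one point to spell out is that the exceptional divisor $\mathbb{P}_{\textnormal{exc}}^1$, which is a component of $\{x'\circ e=0\}$ entering the definition of $\mathscr{N}_{\lambda_0}$, lies entirely over $(\infty,\infty)\in\{t'=0\}$ and therefore disappears after restriction to $e^{-1}(V)$; this is precisely what makes the two minimal extensions agree over $V$ and is the geometric heart of both arguments.
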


\begin{proof} By definition of the minimal extension, $\mathscr{N}_{\lambda_0}$ is the image of the map $j_\dagger j^+ N_{\lambda_0} \longrightarrow j_+ j^+ N_{\lambda_0}$. For $i \neq 0$, $H^i e_+ \mathscr{N}_{\lambda_0}$ is supported on $(\infty,\infty)$, thereby $H^i e_+ \mathscr{N}_{\lambda_0}[t'^{-1}]=0$. As the kernel of $H^0 e_+ \mathscr{N}_{\lambda_0} \longrightarrow M_{\lambda_0}$ is similarly supported on $(\infty,\infty)$, we deduce that $e_+ \mathscr{N}_{\lambda_0}[t'^{-1}]$ is a submodule of $M_{\lambda_0}$.

\vspace{0.1cm}

\noindent
We have $e_+ j_+ j^+ N_{\lambda_0}=(e \circ j)_+ (e \circ j)^+ M_{\lambda_0}$ and, as $e$ is proper, we can write $e_+ j_\dagger j^+ N_{\lambda_0}=e_\dagger j_\dagger j^+ N_{\lambda_0}=(e \circ j)_\dagger (e \circ j)^+ M_{\lambda_0}$. Then $\mathscr{M}_{\lambda_0}[t'^{-1}]$ is the image of the map $e_+ j_\dagger j^+ N_{\lambda_0}[t'^{-1}] \longrightarrow e_+ j_+ j^+ N_{\lambda_0}[t'^{-1}]$. Outside of $(\infty,\infty)$, $\mathscr{M}_{\lambda_0}[t'^{-1}]$ and $e_+\mathscr{N}_{\lambda_0}[t'^{-1}]$ are submodules of $M_{\lambda_0}$ which are isomorphic. Now, we can consider the intersection of these two submodules of $M_{\lambda_0}$, with two morphisms from the intersection to each of them. The kernel and the cokernel of these two morphisms are a priori supported on $(\infty,\infty)$, but as $t'$ is invertible, they are zero. Therefore, $\mathscr{M}_{\lambda_0}[t'^{-1}]$ and $e_+\mathscr{N}_{\lambda_0}[t'^{-1}]$ are isomorphic.
\end{proof}

\vspace{0.1cm}

Let us fix $\gamma \in [0,1)$ and $\lambda=\exp(-2i\pi\gamma)$. As $\widetilde{s}$ and $e$ are proper, the nearby cycles functor is compatible with $(\widetilde{s} \circ e)_+$ \cite[Prop 3.3.17]{Sai88}, so we get

\vspace{-0.35cm}

$$\psi_{\infty,\lambda}(\textnormal{MC}_{\lambda_0}(M))=\psi_{t',\lambda}(\widetilde{s}_+ \mathscr{M}_{\lambda_0})=\psi_{t',\lambda}(\widetilde{s}_+ e_+ \mathscr{N}_{\lambda_0})=\widetilde{s}_+ e_+ T^\lambda.$$

\smallskip

\begin{lemm}\label{explicite}
We set $(M')_\gamma = \ker(x'\partial_{x'}-\gamma)^r$ acting on $\psi_{x'}M'$ for $r \gg 0$ and $(M')^{\lambda}=(M')_\gamma[x',x'^{-1}]$. Let us set
$$T_0^\lambda=\left( (M')^{\lambda \lambda_0}[(x'-1)^{-1}], \nabla+\gamma_0 \left(-\frac{\textnormal{d}x'}{x'}+\frac{\textnormal{d}x'}{x'-1} \right) \right)$$
in the chart $\mathbb{P}_{\textnormal{exc}}^1 \setminus \{\infty\}$ (with the coordinate $x'$ instead of $u_2$) and denote simi-\\larly its meromorphic extension at infinity, with the action of the nilpotent endomorphism $x' \partial_{x'}-(\gamma+\gamma_0)$. For the extension by zero (instead of meromorphic), we use the notation $(T_0^\lambda)'$. Then:

\vspace{0.15cm}

\noindent
(Case 1) For $\lambda \not \in \{1,\bar{\lambda_0}\}$, $T^\lambda$ is supported on $\mathbb{P}_{\textnormal{exc}}^1$ and $T^\lambda=T_0^\lambda$.

\vspace{0.15cm}

\noindent
(Case 2) For $\lambda=1$, $T^\lambda$ is supported on $\mathbb{P}_{\textnormal{exc}}^1$ and is isomorphic to the minimal extension of $T_0^1$ at $x'=0$.

\vspace{0.15cm}

\noindent
(Case 3) For $\lambda=\bar{\lambda_0}$, $T^\lambda$ is supported on $\mathbb{P}_x^1 \cup \mathbb{P}_{\textnormal{exc}}^1$ and comes in an exact sequence
$$0 \rightarrow (T_0^{\bar{\lambda_0}})' \rightarrow T^\lambda \rightarrow T_1 \rightarrow 0$$
compatible with the nilpotent endomorphism, where $T_1$ is supported on $\mathbb{P}_x^1$ and is isomorphic to the meromorphic extension of $M$ at infinity with the action by 0 of the nilpotent endomorphism.
\end{lemm}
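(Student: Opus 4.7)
The strategy is to work in the two affine charts of the blow-up $X$, express $\mathscr{N}_{\lambda_0}$ as a module of normal-crossing type, and compute $T=\psi_{t'\circ e,\lambda}\mathscr{N}_{\lambda_0}$ stratum by stratum on the divisor $\{t'\circ e=0\}=\mathbb{P}^1_x\cup\mathbb{P}^1_{\textnormal{exc}}$, invoking Saito's formulas for such modules. Using $t'=u_1v_1$, $x'=v_1$ in chart~$1$ and $t'=v_2$, $x'=u_2v_2$ in chart~$2$, a direct computation yields, in each chart $i\in\{1,2\}$,
\[ -\frac{\textnormal{d}x'}{x'}-\frac{\textnormal{d}t'}{t'}+\frac{\textnormal{d}(x'-t')}{x'-t'} = -\frac{\textnormal{d}u_i}{u_i}+\frac{\textnormal{d}u_i}{u_i-1}-\frac{\textnormal{d}v_i}{v_i}. \]
Since $x'\circ e=v_1$ in chart~$1$ and $x'\circ e=u_2v_2$ in chart~$2$, $\mathscr{N}_{\lambda_0}$ is minimally extended along $\{v_1=0\}$ in chart~$1$ and along $\{u_2v_2=0\}$ in chart~$2$; elsewhere it agrees with $N_{\lambda_0}$.

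Next, I compute $T$ on the generic points of the two strata. Away from $\{0,1,\infty\}\subset\mathbb{P}^1_{\textnormal{exc}}$ (chart~$2$), $t'\circ e=v_2$, and the $v_2\partial_{v_2}$-eigenvalues on $\mathscr{N}_{\lambda_0}$ are the $x'\partial_{x'}$-eigenvalues of $M(*\infty)$ shifted by $-\gamma_0$ (via the pullback $x'=u_2v_2$ and the $-\gamma_0\,\textnormal{d}v_2/v_2$ term). Thus $\psi_{v_2,\lambda}$ selects the $\lambda\lambda_0$-component of $M(*\infty)$ at infinity, and the residual $\gamma_0(-\textnormal{d}u_2/u_2+\textnormal{d}u_2/(u_2-1))$-factor produces precisely the connection of $T_0^\lambda$, with nilpotent endomorphism $u_2\partial_{u_2}-(\gamma+\gamma_0)$ matching the one in the statement. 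On the generic point of $\mathbb{P}^1_x$ (chart~$1$, $u_1=0$, $v_1\neq 0$), the only $u_1\partial_{u_1}$-eigenvalue is $-\gamma_0$, hence $\psi_{u_1,\lambda}\mathscr{N}_{\lambda_0}=0$ unless $\lambda=\bar{\lambda_0}$, in which case it equals $M(*\infty)$ with $\mathrm{N}=0$; this is the piece $T_1$.

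I then treat the special points and glue. At $1\in\mathbb{P}^1_{\textnormal{exc}}$ the only additional datum is the pole at $u_2=1$, already present in $T_0^\lambda$. At $0\in\mathbb{P}^1_{\textnormal{exc}}$ the strict transform of $\{x'=0\}$ meets the exceptional divisor; for $\lambda\neq 1$ the minimal extension along $u_2=0$ has no effect on $\psi_{v_2,\lambda}$, whereas for $\lambda=1$ it replaces the meromorphic piece by the minimal extension of $T_0^1$ at $x'=0$, separating Cases~$1$ and~$2$. At the intersection $\infty\in\mathbb{P}^1_x\cap\mathbb{P}^1_{\textnormal{exc}}$ (chart~$1$ near $(0,0)$), I decompose $\mathscr{N}_{\lambda_0}$ into $(u_1\partial_{u_1},v_1\partial_{v_1})$-eigenspaces of type $(-\gamma_0,\alpha-\gamma_0)$ and apply the normal-crossings formula for $\psi_{u_1v_1,\lambda}$; this glues compatibly with the exceptional-divisor contribution in Cases~$1$ and~$2$, while for $\lambda=\bar{\lambda_0}$ the gluing yields precisely the short exact sequence of Case~$3$, the extension by zero $(T_0^{\bar{\lambda_0}})'$ reflecting that the minimal extension was taken along $\{v_1=0\}$ (and not along $\{u_1=0\}$) and thus does not permit a meromorphic extension of the exceptional piece across $\infty$.

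The main technical obstacle lies at the intersection point $\infty$ in Case~$3$: the minimal extension is taken only along $\{v_1=0\}$ while the nearby cycle is along $u_1v_1$, so one must carefully track which $(u_1\partial_{u_1},v_1\partial_{v_1})$-eigencomponents of $\mathscr{N}_{\lambda_0}$ survive, verify that the nilpotent $\mathrm{N}$ is transported correctly by the exact sequence, and confirm that $T_0^{\bar{\lambda_0}}$ enters via extension by zero (rather than meromorphically) at $\infty$. An analogous but easier check at $0\in\mathbb{P}^1_{\textnormal{exc}}$ shows that for $\lambda=1$ the nearby cycle captures only the image of $\mathrm{N}$, producing the minimal (not the meromorphic) extension of $T_0^1$ there.
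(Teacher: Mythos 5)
Your plan follows the same overall path as the paper's proof: blow up at $(\infty,\infty)$, compute the connection form in each affine chart, stratify the divisor $\{t'\circ e=0\}=\mathbb{P}^1_x\cup\mathbb{P}^1_{\textnormal{exc}}$, and invoke M.~Saito's nearby-cycle formula for normal crossing modules. The form computation in both charts and the analysis on the generic strata and at $0,1\in\mathbb{P}^1_{\textnormal{exc}}$ are correct; in particular the mechanism you give for distinguishing Cases 1 and 2 (the $u_2\partial_{u_2}$-eigenvalue of $T_0^\lambda$ at $u_2=0$ is an integer only when $\lambda=1$) is sound.

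The genuine gap is at the crossing point $\infty\in\mathbb{P}^1_x\cap\mathbb{P}^1_{\textnormal{exc}}$ in Case 3, which is exactly where the paper invests nearly all of its effort. You assert that the gluing ``yields precisely'' the short exact sequence, and you attribute the extension by zero of $T_0^{\bar{\lambda_0}}$ to the fact that the minimal extension of $\mathscr{N}_{\lambda_0}$ is taken along $\{v_1=0\}$ rather than $\{u_1=0\}$. That heuristic does not hold up: in the $(u_1,v_1)$ chart the relevant $v_1\partial_{v_1}$-eigenvalues $\alpha_2$ entering Saito's formula for $T_{\bm{\beta}}$ are never integers, so the minimal extension along $\{v_1=0\}$ plays \emph{no role at all} here and one may freely replace $\mathscr{N}_{\lambda_0}$ by $N_{\lambda_0}$. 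The actual source of the extension by zero is the biquiver computation at $\bm{\beta}=(-1,-1)$: writing $C_{uv},C_u,C_v$ for the cokernels of $(S_u-E)(S_v-E)$, $S_u-E$, $S_v-E$ on $(N_{\lambda_0})_{-\gamma_0,-\gamma_0}[E]$, the decisive fact is that $S_u-E$ is \emph{injective}, because $S_u$ acts as the fixed scalar $-\gamma_0\,\mathrm{Id}$ inherited from the Kummer factor and $\gamma_0\notin\mathbb{Z}$. This injectivity yields the exact sequence $0\to C_v\to C_{uv}\to C_u\to 0$ and, from it, an exact sequence of biquivers whose sub-object is an extension-by-zero biquiver supported on $\mathbb{P}^1_{\textnormal{exc}}$ and whose quotient is a meromorphic-extension biquiver supported on $\mathbb{P}^1_x$. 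This explicit biquiver identification (including the matrix form of $E+\gamma_0$ on $(M(*\infty)_0)^2$) is also used later to prove $\mathrm{N}(T)=T_0$ and $\prim_\ell T\simeq\prim_{\ell-1}T_0$, so it cannot be replaced by a support-based heuristic; your proposal leaves this core step unproved and rests on an incorrect explanation of it.
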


\begin{proof} We make a local study of the problem, reasoning in the three follo\-wing charts:\\
(i) in the chart $(u_2,v_2)$, called \textit{Chart 1};\\
(ii) in the neighbourhood of $\mathbb{P}_x^1 \setminus \{\infty\}$, called \textit{Chart 2};\\
(iii) in the neighbourhood of $\infty$, called \textit{Chart 3}.

\vspace{0.15cm}

\noindent
The cases of Charts 1 and 2 do not contain any significant problem and are treated in \cite[4.2.4]{Mar18}. In Chart 1, we find
$$\psi_{v_2,\lambda}\mathscr{N}_{\lambda_0}=\left\{
\begin{array}{cl}
  T_0^\lambda \quad &\textnormal{if } \lambda \neq 1 \\[1mm]
	(T_0^\lambda)_{\textnormal{min}(\{0\})} \quad &\textnormal{if } \lambda=1.
\end{array}
\right.$$

\noindent
In Chart 2, in which one can use the coordinates $(x,t')$, we find
$$\psi_{t',\lambda}\mathscr{M}_{\lambda_0}=\left\{
\begin{array}{cl}
  0 \quad &\textnormal{if } \lambda \neq \bar{\lambda_0} \\[1mm]
	M \quad &\textnormal{if } \lambda=\bar{\lambda_0}.
\end{array}
\right.$$

\noindent
Let us make the case of Chart 3 precise. For $\bm{\alpha}=(\alpha_1,\alpha_2)\in \R^2$, we set
$$(N_{\lambda_0})_{\bm{\alpha}}=\bigcup_{r_1 \geq 0}\ker(u_1\partial_{u_1} -\alpha_1)^{r_1} \cap \bigcup_{r_2 \geq 0}\ker(v_1\partial_{v_1} -\alpha_2)^{r_2}.$$

\noindent
By writing the expression of the connection in coordinates $(u_1,v_1)$, we get that $(N_{\lambda_0})_{(-\gamma_0,\alpha-\gamma_0)}$ can be identified with $(M')_\alpha$ with actions of $u_1 \partial_{u_1}$ and $v_1 \partial_{v_1}$ respectively expressed as $-\gamma_0 \textnormal{Id}$ and $x'\partial_{x'}-\gamma_0 \textnormal{Id}$. Here $\psi_{t' \circ e,\lambda}\mathscr{N}_{\lambda_0}=\psi_{g,\lambda}\mathscr{N}_{\lambda_0}$ where $g=u_1 v_1$, and we are in the situation of a calculation of nearby cycles of a coherent $\C[u_1,v_1]\langle \partial_{u_1},\partial_{v_1} \rangle$-module of normal crossing type along $D=\{g=0\}$ where $g=u_1 v_1$ is a monomial function. The general question is developed in \cite[{\S}3.a]{Sai90} (see also \cite[{\S}13.3]{SS19}); let us make it precise in the particular case at hand. We consider the commutative diagram

\vspace{-0.2cm}

$$\xymatrix  @!0 @R=4em @C=4.5pc {
    X \ \ar@{^{(}->}[r]^-{i_g} \ar[rd]_g & X \times \mathbb{C}_z \ar@{->>}[d]^{p_2} \\
     & \mathbb{C}_z
  }
$$

\vspace{0.1cm}

\noindent
where $i_g$ is the graph embedding.

\medskip

\noindent
Then we can see that $(i_g)_+ \mathscr{N}_{\lambda_0}=\mathscr{N}_{\lambda_0}[\partial_z]=\bigoplus_{k \geq 0} \: (\mathscr{N}_{\lambda_0} \otimes \partial_z^k)$ is a left $\mathbb{C}[u_1,v_1,z] \langle \partial_{u_1},\partial_{v_1},\partial_z \rangle$-module equipped with the following actions:

\noindent
(i) action of $\mathbb{C}[u_1,v_1]$ : $f(u_1,v_1) \cdot (m \otimes \partial_z^k)=(f(u_1,v_1) m)\otimes \partial_z^k$.\\
(ii) action of $\partial_z$ : $\partial_z (m \otimes \partial_z^k)=m \otimes \partial_z^{k+1}$.\\
(iii) action of $\partial_{u_1}$ : $\partial_{u_1} (m \otimes \partial_z^k)=(\partial_{u_1}m) \otimes \partial_z^k-v_1 m \otimes \partial_z^{k+1}$.\\
(iv) action of $\partial_{v_1}$ : $\partial_{v_1} (m \otimes \partial_z^k)=(\partial_{v_1}m) \otimes \partial_z^k-u_1 m \otimes \partial_z^{k+1}$.\\
(v) action of $z$ : $z \cdot (m \otimes \partial_z^k)= u_1 v_1 m \otimes \partial_z^k - km \otimes \partial_z^{k-1}$.

\medskip

\noindent
Let us denote by $S_u$ (resp. $S_v$) the operator defined by $S_u(m \otimes \partial_z^k) \! = \! (u_1 \partial_{u_1}m) \otimes \partial_{z}^k$ (resp. $S_v(m \otimes \partial_z^k) = (v_1 \partial_{v_1}m) \otimes \partial_{z}^k$). With $E=z \partial_z$, we get the relations

\vspace{-0.1cm}

$$u_1 \partial_{u_1}(m \otimes \partial_z^k)=(S_u-E-(k+1))(m \otimes \partial_{z}^k)$$

\vspace{-0.1cm}

\noindent
and

\vspace{-0.2cm}

$$v_1 \partial_{v_1}(m \otimes \partial_z^k)=(S_v-E-(k+1))(m \otimes \partial_{z}^k).$$

\bigskip
\medskip

\noindent
Letting $V^{\bullet}(\mathscr{N}_{\lambda_0}[\partial_z])$ denote the $V$-filtration with respect to $z$, we have $T=\psi_{g,\lambda} \mathscr{N}_{\lambda_0}=\textnormal{gr}_V^{\gamma}(\mathscr{N}_{\lambda_0}[\partial_z])$. We have the decompositions $\mathscr{N}_{\lambda_0}^{\textnormal{alg}}=\bigoplus_{\bm{\alpha} \in \mathbb{R}^2} (\mathscr{N}_{\lambda_0})_{\bm{\alpha}}$ and $(T^\lambda)^{\textnormal{alg}}=\bigoplus_{\bm{\beta} \in \mathbb{R}^2} T_{\bm{\beta}}$, and by arguing in a way similar to \cite[Lemma 13.2.26]{SS19} (with left modules), we show that the only indices $\bm{\beta}$ that appear are those such that $\bm{\alpha}=\bm{\beta}+(\gamma+k+1)(1,1)$ for $\bm{\alpha}$ in the decomposition of $\mathscr{N}_{\lambda_0}^{\textnormal{alg}}$ and $k \in \mathbb{Z}$. In particular, we cannot have $\alpha_2 \in \mathbb{Z}$ and having a minimal extension along $\{ v_1=0 \}$ does not play any role here. In other words, we can identify in this part $\mathscr{N}_{\lambda_0}$ and $N_{\lambda_0}$.

\bigskip

\noindent
More precisely, for $\beta_1,\beta_2 \geq -1$, we deduce from \cite[Th.\,3.3]{Sai90} (or \cite[Cor. 13.2.32]{SS19}) the following expressions for $T_{\bm{\beta}}$:

\vspace{-0.05cm}

$$T_{\bm{\beta}}=\left\{
\begin{array}{cl}
  0 \quad &\textnormal{if } \beta_1 \neq -1, \ \beta_2 \neq -1 \\[1mm]
  \coker(S_u-E \in \textnormal{End}((N_{\lambda_0})_{\gamma,\beta_2+\gamma+1}[E])) \quad &\textnormal{if } \beta_1 = -1, \ \beta_2 \neq -1 \\[1mm]
  \coker(S_v-E \in \textnormal{End}((N_{\lambda_0})_{\beta_1+\gamma+1,\gamma}[E])) \quad &\textnormal{if } \beta_1 \neq -1, \ \beta_2 = -1 \\[1mm]
  \coker((S_u-E)(S_v-E) \in \textnormal{End}((N_{\lambda_0})_{\gamma,\gamma}[E])) \quad &\textnormal{if } \bm{\beta}=(-1,-1).
\end{array}
\right.$$

\bigskip
\medskip

\noindent
In what follows, we will work with the point of view of quivers. A good reference can be found in Chapters 3.1.b and 9.4 of \cite{SS19}. Let $A \subset (-1,0]$ be the (finite) set of those $\alpha \in (-1,0]$ such that $(M')_\alpha \neq 0$ and let us look at the different cases for $\bm{\beta} \in [-1,0]^2$ :

\medskip

\noindent
(i) For $\beta_2 \neq -1$, we have $T_{(-1,\beta_2)} \neq 0$ if and only if $\gamma=-\gamma_0$ and $\beta_2 \in A$.\\
(ii) For $\beta_1 \neq -1$, we have $T_{(\beta_1,-1)} \neq 0$ if and only if $\gamma=\alpha-\gamma_0$ with $\alpha \in A \textnormal{ mod } \mathbb{Z}$ and $\beta_1=-\alpha \textnormal{ mod } \mathbb{Z}$.\\
(iii) $T_{(-1,-1)} \neq 0$ if and only if $\gamma=-\gamma_0$ and $0 \in A$.

\bigskip
\medskip

\noindent
We deduce from these relations that:

\bigskip

\noindent
(Case 1+2) If $\gamma \neq -\gamma_0$ then $T^\lambda$ is supported on $\mathbb{P}_{\textnormal{exc}}^1$ and, according to (ii), is determined by the only data of $\coker(S_v-E \in \textnormal{End}((N_{\lambda_0})_{-\gamma_0,\gamma}[E]))$ equipped with an action of $E-\gamma$, that we can identify with $(N_{\lambda_0})_{-\gamma_0,\gamma}$ where the action of $E-\gamma$ can be identified with $S_v-\gamma$. Consequently, $T^\lambda$ is determined by $(M')_{\gamma+\gamma_0}$ with an action of $x' \partial_{x'}-(\gamma+\gamma_0)$.

\bigskip

\noindent
(Case 3) If $\gamma=-\gamma_0$ then $T^\lambda$ is determined by two data:

\medskip

\noindent
$\bullet$ firstly by the data, according to (i), of $\coker(S_u-E \in \textnormal{End}((N_{\lambda_0})_{-\gamma_0,\alpha-\gamma_0}[E]))$ for $\alpha \in A \textnormal{ mod } \mathbb{Z}$, $\alpha \not\in \mathbb{Z}$, supported on $\mathbb{P}_{x}^1$ and equipped with an action of $E+\gamma_0$. We can identify it with $(N_{\lambda_0})_{-\gamma_0,\alpha-\gamma_0}$ where the action of $E+\gamma_0$ is identified with $S_u+\gamma_0$, that we identify with $(M')_{\alpha}$ with an action by $0$.

\medskip

\noindent
$\bullet$ secondly by the data of the biquiver

\vspace{-0.2cm}

$$\xymatrix{
    T_{(-1,-1)} \ \ar@<2pt>[r]^-{v_1} \ar@<2pt>[d]^-{u_1} & \ T_{(-1,0)} \ar@<2pt>[l]^-{\partial_{v_1}} \\
    T_{(0,-1)} \ar@<2pt>[u]^-{\partial_{u_1}} &
  }
$$

\medskip

\noindent
As $u_1^{-1}$ acts on $(N_{\lambda_0})_{-\gamma_0+1,-\gamma_0}[E]$ and $v_1^{-1}$ on $(N_{\lambda_0})_{-\gamma_0,-\gamma_0+1}[E]$, it can be assumed that $T_{(-1,-1)}$, $T_{(0,-1)}$ and $T_{(-1,0)}$ are all three cokernels of maps of $\textnormal{End}((N_{\lambda_0})_{-\gamma_0,-\gamma_0}[E])$. Now we set $C_{uv}=\coker(S_u-E)(S_v-E)$, $C_u=\coker(S_u-E)$ and $C_v=\coker(S_v-E)$, and the previous biquiver can be identified with the following

\vspace{-0.2cm}

$$\xymatrix{
    C_{uv} \ \ar@<2pt>@{->>}[r]^-{\varphi_v} \ar@<2pt>@{->>}[d]^-{\varphi_u} & \ C_u \ar@<2pt>[l]^-{S_v-E} \\
    C_v \ar@<2pt>[u]^-{S_u-E} &
  }
$$

\medskip

\noindent
where $\varphi_u : C_{uv} \rightarrow C_v$ is induced by the inclusion $\im(S_u-E)(S_v-E) \subset \im(S_v-E)$, and the same for $\varphi_v$. As $S_u-E \in \textnormal{End}((N_{\lambda_0})_{-\gamma_0,-\gamma_0}[E])$ is injective (because $S_u$ is identified on $(M')_0$ with $-\gamma_0 \textnormal{Id}$) and

$$\im(S_u-E:C_v \rightarrow C_{uv})=\frac{\im(S_u-E)}{\im(S_u-E)(S_v-E)}=\ker \: \varphi_v,$$

\bigskip

\noindent
we deduce the following exact sequence:

\vspace{0.05cm}

$$0 \rightarrow C_v \rightarrow C_{uv} \rightarrow C_u \rightarrow 0.$$

\bigskip

\noindent
Therefore, we have an exact sequence of biquivers:

\vspace{-0.2cm}

$$\hspace{-1cm}\xymatrix{
   0 \hspace{-1.8cm} & \longrightarrow \hspace{-1cm} & C_v \ \ar@<2pt>[r] \ar@<1pt>[d]^-{S_u-E} & \ 0  \ \ \longrightarrow \hspace{-0.5cm} \ar@<2pt>[l] & C_{uv} \ \ar@<2pt>@{->>}[r]^-{\varphi_v} \ar@<2pt>@{->>}[d]^-{\varphi_u} & \ C_u \ \ \longrightarrow \hspace{-0.6cm} \ar@<2pt>[l]^-{S_v-E} & \!\!\! C_u \ \ar@<2pt>[r]^-{\textnormal{Id}} \ar@<-3pt>[d] & \ C_u \ \longrightarrow \ 0 \ar@<2pt>[l]^-{S_v-E} \\
   & & \!\! C_v \ar@<3pt>[u]^-{\textnormal{Id}} & & C_v \ar@<2pt>[u]^-{S_u-E} & & 0 \quad \ar@<7pt>[u] &
  }
$$

\medskip

\noindent
$\bullet$ The left biquiver is a quiver representing the extension by zero supported on $\mathbb{P}_{\textnormal{exc}}^1$ and identified with $(N_{\lambda_0})_{-\gamma_0,-\gamma_0}$ where the action of $E+\gamma_0$ can be identified with $S_v+\gamma_0$, in other words $(M')_0$ with the action of $x' \partial_{x'}$. This is the following biquiver:

\vspace{-0.2cm}

$$\xymatrix{
    (M')_0 \ \ \ \ar@<2pt>[r] \ar@<-6pt>[d]^-{-x' \! \partial_{x'}} & \ \ \ 0 \ar@<2pt>[l] \\
    (M')_0 \ \ \ \ar@<10pt>[u]^-{\textnormal{Id}} & 
  }
$$

\vspace{0.1cm}

\noindent
$\bullet$ The right biquiver is a quiver representing the meromorphic extension supported on $\mathbb{P}_x^1$ and identified with $(N_{\lambda_0})_{-\gamma_0,-\gamma_0}$ where the action of $E+\gamma_0$ is equal to $0$, in other words $(M')_0$ with the action by $0$. This is the following biquiver:

\vspace{-0.5cm}

$$\xymatrix{
    (M')_0 \ \ \ar@<2pt>[r]^-{\textnormal{Id}} \ar@<-4pt>[d] & \ \ (M')_0 \ar@<2pt>[l]^-{x' \! \partial_{x'}} \\
    0 \ \ar[u] & 
  }
$$

\vspace{0.1cm}

\noindent
$\bullet$ It is possible to make the central biquiver explicit in terms of $(M')_0$, insofar as we can identify $C_{uv}$ with $((M')_0)^2$ with an action of

$$E+\gamma_0=\left (\begin{array}{c|c}
\gamma_0 \textnormal{Id} \ & \gamma_0(x' \! \partial_{x'}-\gamma_0 \textnormal{Id})  \\
\hline
\textnormal{Id} \ & x' \! \partial_{x'}-\gamma_0 \textnormal{Id}
\end{array}\right),$$

\medskip

\noindent
and we get the following biquiver:

\vspace{-0.3cm}

$$\xymatrix @!0 @R=1.8cm @C=3cm {
    ((M')_0)^2 \ \ \ \ \ar@<2pt>@{->>}[r]^-{p_v} \ar@<-6pt>@{->>}[d]^-{p_u} & \ \ \ \ (M')_0 \ar@<2pt>[l]^-{(x' \! \partial_{x'}-\gamma_0 \textnormal{Id},-\textnormal{Id})} \\
    \!\!\!\!\!\! (M')_0 \ar@<10pt>[u]^-{(-\gamma_0 \textnormal{Id},-\textnormal{Id})} & 
  }
$$

\noindent
where $p_u=p_1+(x' \! \partial_{x'}-\gamma_0)p_2$ and $p_v=p_1-\gamma_0 p_2$, with $p_1$ the projection onto the first factor and $p_2$ onto the second factor.

\noindent
Finally, for $\lambda=\bar{\lambda_0}$ we have an exact sequence
$$0 \rightarrow (T_0^{\bar{\lambda_0}})' \rightarrow T^\lambda \rightarrow T_1 \rightarrow 0$$
where $(T_0^{\bar{\lambda_0}})'$ is the extension by zero of $T_0^{\bar{\lambda_0}}$ at infinity equipped with the nilpotent endomorphism $x' \partial_{x'}$, and $T_1$ is supported on $\mathbb{P}_x^1$ and given by the meromorphic extension of $M$ at infinity equipped with the nilpotent endomorphism $0$.

\medskip

\noindent
By gluing the expressions obtained for the different values of $\lambda$ in each of the three charts, we get the announced result of the lemma.
\end{proof}

\medskip

By construction, the complex $K^\bullet=\widetilde{s}_+ e_+ T^\lambda$ has cohomology in degree zero only. More precisely, as $\widetilde{s} \circ e : \mathbb{P}_{\textnormal{exc}}^1 \cup \mathbb{P}_{x}^1 \rightarrow \{\textnormal{pt}\}$, that amounts to saying that $\textbf{R} \Gamma(\mathbb{P}_{\textnormal{exc}}^1 \cup \mathbb{P}_{x}^1,\DR^{\textnormal{an}}T^\lambda)$ is a two-term complex with a kernel reduced to zero. If we take the monodromy filtration $\mathrm{M}_\bullet$ into account, we have the following more precise result:

\vspace{0.2cm}

\begin{lemm}\label{hj}
$H^j(\textnormal{gr}_\ell^{\mathrm{M}} K^\bullet)=0$ for $j \neq 0$ and $\ell \in \mathbb{Z}$.
\end{lemm}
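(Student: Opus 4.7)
The plan is to derive the vanishing from Saito's strictness theorem for the weight filtration under proper direct images, combined with the fact that $K^\bullet$ has cohomology concentrated in degree zero which was already noted above. The guiding idea is that $\mathrm{M}_\bullet$ is, up to the standard shift, the weight filtration of a mixed Hodge module, and this filtration is strict with respect to $(\widetilde{s} \circ e)_+$.

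First I would observe that, since $\mathscr{N}_{\lambda_0}$ underlies a polarizable pure Hodge module on $X$, the nearby cycle $T = \psi_{t' \circ e,\lambda}\mathscr{N}_{\lambda_0}$ canonically underlies a polarizable mixed Hodge module on $\mathbb{P}_{\textnormal{exc}}^1 \cup \mathbb{P}_x^1$, whose weight filtration coincides, up to the usual shift depending on $\lambda$ and on the ambient weight, with the monodromy filtration $\mathrm{M}_\bullet$ attached to the nilpotent operator $\mathrm{N}$. In particular, each graded piece $\textnormal{gr}_\ell^{\mathrm{M}} T$ is a pure polarizable Hodge module on its support.

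Next, since $\widetilde{s}\circ e : \mathbb{P}_{\textnormal{exc}}^1 \cup \mathbb{P}_x^1 \longrightarrow \{\textnormal{pt}\}$ is proper, Saito's theorem on direct images of mixed Hodge modules (\cite[{\S}5]{Sai88}) applies: the complex $K^\bullet = (\widetilde{s}\circ e)_+ T$ underlies a complex of mixed Hodge modules on a point, and the weight filtration is strict under this pushforward. This strictness is equivalent to the degeneration at $E_1$ of the weight spectral sequence
$$E_1^{-\ell,\ell+j} = H^j\bigl(\textnormal{gr}_\ell^{\mathrm{M}} K^\bullet\bigr) \Longrightarrow H^{j}K^\bullet,$$
and in particular yields a canonical identification $H^j(\textnormal{gr}_\ell^{\mathrm{M}} K^\bullet) \simeq \textnormal{gr}_\ell^{\mathrm{M}} H^j(K^\bullet)$ for every $j$ and every $\ell$.

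I would then conclude by invoking the observation made immediately before the lemma, namely that $H^j K^\bullet = 0$ for every $j \neq 0$. Through the previous identification, this forces $H^j(\textnormal{gr}_\ell^{\mathrm{M}} K^\bullet) = 0$ for every $j \neq 0$ and every $\ell \in \mathbb{Z}$, which is exactly the desired statement. The main subtlety I anticipate is a bookkeeping one: one must keep careful track of the shift relating Saito's weight filtration on a nearby cycle object to the monodromy filtration $\mathrm{M}_\bullet$ of the lemma, and also make sure that the strictness invoked is the one for $\mathrm{M}_\bullet$ itself rather than only for the Hodge filtration. Once this normalization is fixed, no further computation is required.
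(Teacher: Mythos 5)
The crux of your argument is the assertion that the monodromy/weight filtration is strict under the proper pushforward $(\widetilde{s}\circ e)_+$, i.e., that the weight spectral sequence degenerates at $E_1$. That is not Saito's theorem and is false in general: Saito proves $E_2$-degeneration of the weight spectral sequence under proper direct image, not $E_1$-degeneration. Concretely, the $E_1$-terms $H^{j}(\textnormal{gr}_\ell^{\mathrm{M}} K^\bullet)$ are pure Hodge structures, but $d_1$ maps $H^{j}(\textnormal{gr}_\ell^{\mathrm{M}} K^\bullet)$ (of weight $\ell+j$) to $H^{j+1}(\textnormal{gr}_{\ell-1}^{\mathrm{M}} K^\bullet)$ (of weight $(\ell-1)+(j+1)=\ell+j$), so the source and target have the same weight and nothing forces $d_1$ to vanish. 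The purity argument you would like to invoke only kills the higher differentials $d_r$, $r\geq 2$. Consequently the vanishing $H^j K^\bullet = 0$ for $j\neq 0$ does not automatically descend to each graded piece $\textnormal{gr}_\ell^{\mathrm{M}}$, and the lemma is precisely the extra statement that needs to be established.

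The paper supplies this extra input by direct computation. For $\lambda\neq\bar{\lambda_0}$ one writes $T=\widetilde{T}\otimes 1_{\gamma_0}$, where $1_{\gamma_0}$ has residue $\gamma_0\notin\mathbb{Z}$ at $x'=1$; this forces the differential $\nabla_{\partial_{x'}}$ on $\textnormal{gr}_\ell^{\mathrm{M}} T$ to strictly increase pole order at $x'=1$, hence to be injective, giving degree-zero concentration for each $\ell$ separately. For $\lambda=\bar{\lambda_0}$ one first proves $\mathrm{N}(T)=T_0$ via the biquiver description, which identifies $T\supset T_0$ as a minimal-extension quiver and yields $\prim_\ell T\simeq\prim_{\ell-1}T_0$ for $\ell\geq 1$; this reduces those primitive parts to the previous case, and the $\ell=0$ part is then recovered by subtraction from the already-known concentration of the total complex. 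Your outline would have to replace the appeal to strictness by some verification of this kind.
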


\begin{proof} (Case 1+2) If $\lambda \neq \bar{\lambda_0}$, then $T^\lambda$ is supported on $\mathbb{P}_{\textnormal{exc}}^1$ and localized at infinity. Consequently, we can see $T^\lambda$ as a $\C[x']\langle \partial_{x'}\rangle$-module, that is a $\C[x']$-module with a connection $\nabla_{\partial_{x'}}$. The question is to show that the morphism
$$\textnormal{gr}_{\ell}^{\mathrm{M}} T^\lambda \overset{\nabla_{\partial_{x'}}}{\longrightarrow} \textnormal{gr}_{\ell}^{\mathrm{M}} T^\lambda$$

\noindent
has a kernel reduced to zero. With $\mathrm{N}$ the nilpotent endomorphism, let us set
$$\bar{T}^\lambda=\left( (M')^{\lambda\lambda_0}, \bar{\nabla}_{\partial_{x'}}=\frac{\partial}{\partial x'}+\frac{\gamma}{x'}\textnormal{Id} +\frac{\mathrm{N}}{x'} \right)$$
\noindent
where $\bar{T}^\lambda$ is minimally extended at 0 if $\lambda=1$, and
$$1_{\gamma_0}=\left( \C[x',(x'-1)^{-1}], \frac{\partial}{\partial x'}+ \frac{\gamma_0}{x'-1} \textnormal{Id} \right)$$
so that $T^\lambda=\bar{T}^\lambda \otimes 1_{\gamma_0}$. For $m \in \bar{T}^\lambda$ and $m' \in \C[x',(x'-1)^{-1}]$, we have
$$\nabla_{\partial_{x'}}(m \otimes m')=\bar{\nabla}_{\partial_{x'}}(m) \otimes m' + m \otimes \left( \frac{\partial}{\partial x'}+ \frac{\gamma_0}{x'-1} \textnormal{Id} \right) m'.$$

\noindent
Now, we have $\textnormal{gr}_{\ell}^{\mathrm{M}} T^\lambda=\textnormal{gr}_{\ell}^{\mathrm{M}} \bar{T}^\lambda \otimes 1_{\gamma_0}$ and we want to show that
$$\ker \left( \textnormal{gr}_{\ell}^{\mathrm{M}} \bar{T}^\lambda \otimes 1_{\gamma_0} \overset{\nabla_{\partial_{x'}}}{\longrightarrow} \textnormal{gr}_{\ell}^{\mathrm{M}} \bar{T}^\lambda \otimes 1_{\gamma_0} \right)=0.$$

\noindent
For $m \in \textnormal{gr}_{\ell}^{\mathrm{M}} \bar{T}^\lambda$, let us notice the equality
$$\nabla_{\partial_{x'}} \left( m \otimes \frac{1}{(x'-1)^k} \right) =\bar{\nabla}_{\partial_{x'}}(m) \otimes \frac{1}{(x'-1)^k} - \frac{km}{(x'-1)^{k+1}} + \frac{\gamma_0 m}{(x'-1)^{k+1}},$$

\noindent
from which we deduce, as $\gamma_0 \not\in \mathbb{Z}$, that $m \otimes (x'-1)^{-k}$ has a pole at 1 of order $k+1$ if $m \neq 0$. Therefore, if an element $m \otimes m'$ is such that $\nabla_{\partial_{x'}}(m \otimes m')=0$, then $m=0$.

\vspace{0.4cm}

\noindent
(Case 3) If $\lambda = \bar{\lambda_0}$, we have the exact sequence $0 \rightarrow T_0 \rightarrow T^\lambda \rightarrow T_1 \rightarrow 0$ where $T_0=(T_0^{\bar{\lambda_0}})'$ is supported on $\mathbb{P}_{\textnormal{exc}}^1$ and $T_1$ is supported on $\mathbb{P}_x^1$. Equivalently, we can think in terms of primitive parts instead of graded parts, as we shall do.

\vspace{0.2cm}

\noindent
The reasoning of the previous case applies in the same way to $K_0^\bullet=\widetilde{s}_+ e_+ T_0$, yielding that the complexes $\prim_\ell K_0^\bullet$ are concentrated in degree 0 for all $\ell \in \mathbb{N}$. As $\mathrm{N}$ is zero on $T_1$, we have $\mathrm{N}(T^\lambda) \subset T_0$. Let us show we have equality and, for that, let us go back to the description of $T^\lambda$ at the neighbourhood of $\infty$ in terms of biquivers:

\vspace{-0.4cm}

$$\xymatrix @!0 @R=1.5cm @C=3cm {
    ((M')_0)^2 \ \ \ \ \ar@<2pt>@{->>}[r]^-{p_v} \ar@<-6pt>@{->>}[d]^-{p_u} & \ \ \ \ (M')_0 \ar@<2pt>[l]^-{(x' \! \partial_{x'}-\gamma_0 \textnormal{Id},-\textnormal{Id})} \\
    \!\!\!\!\!\! (M')_0 \ar@<10pt>[u]^-{(-\gamma_0 \textnormal{Id},-\textnormal{Id})} & 
  }
$$

\medskip

\noindent
The action of $E+\gamma_0$ on $((M')_0)^2$ is given by
$$E+\gamma_0 = \left (\begin{array}{c|c}
\gamma_0 \textnormal{Id} \ & \gamma_0(x' \partial_{x'}-\gamma_0 \textnormal{Id})  \\
\hline
\textnormal{Id} \ & x' \partial_{x'}-\gamma_0 \textnormal{Id}
\end{array}\right),$$

\noindent
whose rank is equal to the dimension of $(M')_0$, and thus
$$(E+\gamma_0)((M')_0)^2 = \{(\gamma_0 m,m) \in ((M')_0)^2 \ | \ m \in (M')_0 \} \simeq (M')_0.$$

\medskip

\noindent
Now, a calculation shows that the following diagram

\vspace{-0.2cm}

$$\xymatrix @!0 @R=1.5cm @C=3cm {
    \!\!\!\!\!\!\!\!\!\!\!\! ((M')_0)^2 \quad \ar@{->>}[r]^-{p_2 \circ (E+\gamma_0)} \ar@<-6pt>@{->>}[d]^-{p_u} & \quad (M')_0 \ar@<2pt>@{->>}[d]^-{-x' \! \partial_{x'}} \\
    \!\!\!\!\!\! (M')_0 \quad \ar@<10pt>[u]^-{(-\gamma_0 \textnormal{Id},-\textnormal{Id})} \ar@{->>}[r]^-{-x' \! \partial_{x'}} & \quad (M')_0 \ar@<2pt>[u]^-{\textnormal{Id}}
  }
$$

\noindent
is commutative, so the image by $\mathrm{N}$ of the biquiver representing $T^\lambda$ gives the biquiver representing $T_0$, in other words $\mathrm{N}(T^\lambda)=T_0$. Consequently, we are in a situation of a minimal extension quiver:

\vspace{-0.35cm}

$$\xymatrix{
T^\lambda \quad \ar@<4pt>@{->>}[r]^-{\mathrm{N}} & \quad T_0. \ar@<4pt>@{_{(}->}[l]
}
$$

\medskip

\noindent
We deduce from \cite[Prop. 2.1.1(iii)]{KK87} that $\prim_\ell T^\lambda \simeq \prim_{\ell-1} T_0$ for $\ell \geq 1$, and thus the complexes $\prim_\ell K^\bullet$ are concentrated in degree 0 for all $\ell \geq 1$. Moreover, as the total complex $K^\bullet$ is concentrated in degree 0, the complex $P_0 K^\bullet$ is also concentrated in degree 0. We have the same property for graded parts instead of primitive parts.
\end{proof}

\vspace{0.1cm}

Now, with these two lemmas, we are able to show the main theorem. We can equip $T^\lambda$ with a Hodge filtration by properties of nearby cycles, and $(M')^{\lambda\lambda_0}$ as well, in such a way that the formula of Lemma \ref{explicite} is compatible with the Hodge filtrations. Note that these two objects are not variations of polarized Hodge structures, but variations of mixed Hodge structures. However, one can adapt the arguments of \cite{DS13} to the generalized context below.

\vspace{0.2cm}

\begin{rema}\label{remarque}
Let us begin making the Hodge filtration more explicit. In the chart $(u_2,v_2)$, let us consider the $\mathcal{D}$-module $e^+(M'\boxtimes\mathbb{C}[t',t^{\prime-1}])$, on which $u_2$ and $v_2$ act in an invertible way. This is
\[
e^*(M'\boxtimes\mathbb{C}[t',t^{\prime-1}])=\mathbb{C}[u_2,u_2^{-1},v_2,v_2^{-1}]\otimes_{\mathbb{C}[x',x^{\prime-1}]}M',
\]
on which the action of the connection is induced by
\[
u_2\partial_{u_2}(1\otimes m)=v_2\partial_{v_2}(1\otimes m)=x'\partial_{x'}(1\otimes m).
\]
\noindent
We consider the localized Hodge filtration $\widetilde F^pM'=\mathbb{C}[x',x^{\prime-1}]\otimes_{\mathbb{C}[x']}F^pM'$, $\widetilde F^p(M'\boxtimes\mathbb{C}[t',t^{\prime-1}])=(\widetilde F^pM')\boxtimes\mathbb{C}[t',t^{\prime-1}]$ and its pull-back $\widetilde F^pe^*(M'\boxtimes\mathbb{C}[t',t^{\prime-1}])=\mathbb{C}[u_2,u_2^{-1},v_2,v_2^{-1}]\otimes_{\mathbb{C}[x',x^{\prime-1}]}\widetilde F^pM'$. In order to compute nearby cycles along $v_2$, we make use of the $V$-filtration $V^\bullet(e^+(M'\boxtimes\mathbb{C}[t',t^{\prime-1}]))$ along $v_2$, that we will compare with the $V$-filtration of $M'$ along $x'$ denoted by $V^\bullet(M')$. The above relations link the Bernstein polynomial with respect to $v_2$ with that with respect to $x'$ and lead to the identification
\[
\mathbb{C}[u_2,u_2^{-1}]\otimes_{\mathbb{C}[u_2]}V^a(e^+(M'\boxtimes\mathbb{C}[t',t^{\prime-1}]))=\mathbb{C}[u_2,u_2^{-1},v_2]\otimes_{\mathbb{C}[x']}V^a M'.
\]
For $\lambda=e^{-2i\pi a}$, let us define the functor $\widetilde\psi_{v_2,\lambda}$ as the functor $\psi_{v_2,\lambda}$ followed by localization $\mathbb{C}[u_2,u_2^{-1}]\otimes_{\mathbb{C}[u_2]}\bullet \: $. We thus find an isomorphism of filtered $\mathbb{C}[u_2,u_2^{-1}]\langle u_2\partial_{u_2}\rangle$-modules
\begin{multline*}
(\widetilde\psi_{v_2,\lambda}e^+(M'\boxtimes\mathbb{C}[t',t^{\prime-1}]),\widetilde F^\bullet \widetilde\psi_{v_2,\lambda}e^+(M'\boxtimes\mathbb{C}[t',t^{\prime-1}]))\\\simeq\mathbb{C}[u_2,u_2^{-1}]\otimes_\mathbb{C}(\psi_{x',\lambda}M', F^\bullet\psi_{x',\lambda}M'),
\end{multline*}
since, for $a\in(-1,0]$, the filtration induced by $\widetilde F^\bullet M'$ on $\mathrm{gr}^a_VM'$ is equal to that induced by $F^\bullet M'$, and
where the action of $u_2\partial_{u_2}$ on the right-hand side is induced by $u_2\partial_{u_2}(1\otimes m)=0$.

\vspace{0.2cm}

\noindent
Considering instead the pull-back of $M_{\lambda_0}$ amounts to twisting $e^+(M'\boxtimes\mathbb{C}[t',t^{\prime-1}])$ by the pull-back connection $e^+(\mathbb{C}[x',x^{\prime-1}]\boxtimes\mathcal K_{\lambda_0})$. This leads to also invert the action of $u_2-1$ and to localize the $\widetilde F$-filtration along the divisor $u_2=1$. On the other hand, it twists the monodromies around $u_2=0$ and $v_2=0$ by $\lambda_0$. The localization of the Hodge filtration $F^\bullet T^\lambda$ is by definition the localization of the filtration induced by $\widetilde F^\bullet N_{\lambda_0}$ on $\psi_{v_2,\lambda}N_{\lambda_0}$. As a consequence, if we set $\widetilde T^\lambda=\mathbb{C}[u_2,u_2^{-1},(u_2-1)^{-1}]\otimes_{\mathbb{C}[u_2]}T^\lambda$ that we endow with the localized filtration $\widetilde F^\bullet \widetilde T^\lambda=\mathbb{C}[u_2,u_2^{-1},(u_2-1)^{-1}]\otimes_{\mathbb{C}[u_2]}F^\bullet T^\lambda$, we obtain an isomorphism (notation of Lemma \ref{explicite})

\vspace{-0.3cm}

\begin{equation}\label{iso}
(\widetilde T^\lambda, \widetilde F^\bullet \widetilde T^\lambda)\simeq (T_0^\lambda, \widetilde F^\bullet T_0^\lambda),
\end{equation}

\noindent
where $\widetilde F^\bullet T_0^\lambda$ is defined by $\widetilde F^p T_0^\lambda=\mathbb{C}[u_2,u_2^{-1},(u_2-1)^{-1}]\otimes_{\mathbb{C}}(F^p\psi_{x',\lambda\lambda_0}M')$.
\end{rema}

\vspace{0.2cm}

\begin{proof}[Proof of Theorem \ref{theo1}] (Case 1) Let us begin with the case $\lambda \not\in \{1,\bar{\lambda_0}\}$ and, as a first step, without taking the monodromy filtration into account. As $\widetilde{s} \circ e : \mathbb{P}_{\textnormal{exc}}^1 \rightarrow \{\textnormal{pt}\}$, we have $\psi_{\infty,\lambda}(\textnormal{MC}_{\lambda_0}(M))=\widetilde{s}_+ e_+ T^\lambda=\textbf{R} \Gamma(\mathbb{P}_{\textnormal{exc}}^1,\DR T^\lambda)$. Setting $\bm{x}=\{0,1,\infty\}$ and $\DR T^\lambda=i_* \mathscr{V}$, where $i:\mathbb{P}_{\textnormal{exc}}^1 \setminus \bm{x} \hookrightarrow \mathbb{P}_{\textnormal{exc}}^1$ denotes the open inclusion and $\mathscr{V}$ is a non-constant local system on $\mathbb{P}_{\textnormal{exc}}^1 \setminus \bm{x}$, we have $\bm{H}^m(\mathbb{P}_{\textnormal{exc}}^1,\DR T^\lambda)=H^m(\mathbb{P}_{\textnormal{exc}}^1,i_* \mathscr{V})$. This quantity is zero for $m=0$ because there is no section supported on a point, and $\mathscr{V}$ has no global section since one of its local monodromies has not $1$ as an eigenvalue. Using the Poincar\'e duality theorem, we have also $H^2(\mathbb{P}_{\textnormal{exc}}^1,i_* \mathscr{V})=0$ if we argue with the dual $\mathscr{V}^\vee$. Then

\vspace{-0.35cm}

$$\nu_{\infty,\lambda}^p(\textnormal{MC}_{\lambda_0}(M))=\dim \textnormal{gr}_F^p \psi_{\infty,\lambda}(\textnormal{MC}_{\lambda_0}(M))=\dim \textnormal{gr}_F^p \bm{H}^1(\mathbb{P}_{\textnormal{exc}}^1,\DR T^\lambda).$$

\vspace{0.1cm}

\noindent
Adapting \cite[Prop. 2.3.3]{DS13} (we use the formula given at the end of their proof, without using $\nu_{\infty,1,\textnormal{prim}}^{p-1}(T^\lambda)$), we have

\vspace{-0.4cm}

\begin{align}\label{eq1}
\nu_{\infty,\lambda}^p(\textnormal{MC}_{\lambda_0}(M))&=\delta^{p-1}(T^\lambda)-\delta^p(T^\lambda) - h^p(T^\lambda)- h^{p-1}(T^\lambda) \\[-1mm]
& \hspace{2.5cm} + \sum\limits_{x \in \bm{x}} \Bigg( \underbrace{\sum\limits_{\mu \neq 1} \nu_{x,\mu}^{p-1}(T^\lambda)}_{=h^{p-1}(T^\lambda)} + \underbrace{\mu_{x,1}^p(T^\lambda)}_{=0} \Bigg).\notag \\
&= \delta^{p-1}(T^\lambda)-\delta^p(T^\lambda) - h^p(T^\lambda)+2 h^{p-1}(T^\lambda) \notag
\end{align}

\noindent
since we remark that for each of the three singular points of $T^\lambda$, we have a single eigenvalue for the local monodromy (different from $1$) and only one term $\nu_{x,\mu}^{p-1}(T^\lambda)$ is non zero, hence equal to $h^{p-1}(T^\lambda)$.

\vspace{0.2cm}

\noindent
Now, as $\lambda \lambda_0 \neq 1$, we have $\bm{H}^1(\mathbb{P}_x^1,\DR (M')^{\lambda \lambda_0}) = 0$. The same reasoning with $(M')^{\lambda \lambda_0}$ instead of $T^\lambda$, which has two singularities, gives

\vspace{-0.3cm}

\begin{equation}\label{eq2}
0=\delta^{p-1}((M')^{\lambda \lambda_0})-\delta^p((M')^{\lambda \lambda_0}) - h^p((M')^{\lambda \lambda_0}) + h^{p-1}((M')^{\lambda \lambda_0}).
\end{equation}

\vspace{0.2cm}

\noindent
In accordance with Lemma \ref{explicite}, we have

\vspace{-0.3cm}

$$T^\lambda=(M')^{\lambda \lambda_0} \otimes \left( \C[x',x'^{-1},(x'-1)^{-1}], \textnormal{d}+\gamma_0 \left(-\frac{\textnormal{d}x'}{x'}+\frac{\textnormal{d}x'}{x'-1} \right) \right),$$

\vspace{0.05cm}

\noindent
and then we can infer, according to \cite[2.2.12]{DS13}, that $h^p(T^\lambda)=h^p((M')^{\lambda \lambda_0})$. Let us apply the formula for the twist by a unitary rank-one connection given in \cite[Prop. 2.3.2]{DS13}:

\vspace{0.1cm}

\begin{multline}
\delta^p(T^\lambda)=\delta^p((M')^{\lambda \lambda_0})-h^p((M')^{\lambda \lambda_0})+\sum_{\alpha \in [\gamma_0,1)} \nu_{\infty,e^{-2i\pi\alpha}}^p((M')^{\lambda \lambda_0}) \\
+\sum_{\alpha \in [1-\gamma_0,1)} \underbrace{\nu_{1,e^{-2i\pi\alpha}}^p((M')^{\lambda \lambda_0})}_{=0 \ \textnormal{because } \alpha \neq 0}.
\end{multline}

\vspace{-0.15cm}

\noindent
The remaining sum can be expressed as

\vspace{-0.25cm}

$$\sum_{\alpha \in [\gamma_0,1)} \nu_{\infty,e^{-2i\pi\alpha}}^p((M')^{\lambda \lambda_0})=\left\{
\begin{array}{cl}
  h^p((M')^{\lambda \lambda_0}) \ &\textnormal{if } \gamma \in (0,1-\gamma_0) \\[1mm]
  0 \ &\textnormal{if } \gamma \in (1-\gamma_0,1),
\end{array}
\right.
$$

\vspace{0.15cm}

\noindent
so we deduce that

\vspace{-0.25cm}

\begin{equation}\label{eq3}
\delta^p(T^\lambda)=
\left\{
\begin{array}{cl}
  \delta^p((M')^{\lambda \lambda_0}) \ &\textnormal{if } \gamma \in (0,1-\gamma_0) \\[1mm]
  \delta^p((M')^{\lambda \lambda_0})-h^p((M')^{\lambda \lambda_0}) \ &\textnormal{if } \gamma \in (1-\gamma_0,1).
\end{array}
\right.
\end{equation}

\vspace{0.15cm}

\noindent
We are now able to apply the formula (\ref{eq1}):\\
(i) For $\gamma \in (0,1-\gamma_0)$, we have:

\vspace{-0.35cm}

\begin{align*}
\nu_{\infty,\lambda}^p(\textnormal{MC}_{\lambda_0}(M)) &= \delta^{p-1}(T^\lambda)-\delta^p(T^\lambda) - h^p(T^\lambda) + 2h^{p-1}(T^\lambda) \\
&= \delta^{p-1}((M')^{\lambda \lambda_0})-\delta^p((M')^{\lambda \lambda_0})-h^p((M')^{\lambda \lambda_0}) \\
& \hspace{4cm} +2h^{p-1}((M')^{\lambda \lambda_0}) \\
&= h^{p-1}((M')^{\lambda \lambda_0}) \ \ \textnormal{(according to } \ref{eq2} \textnormal{)}.
\end{align*}

\vspace{0.25cm}

\noindent
(ii) For $\gamma \in (1-\gamma_0,1)$, we have:

\vspace{-0.35cm}

\begin{align*}
\nu_{\infty,\lambda}^p(\textnormal{MC}_{\lambda_0}(M)) &= \notag \\
& \hspace{-1.8cm} \bigg( \delta^{p-1}((M')^{\lambda \lambda_0})-h^{p-1}((M')^{\lambda \lambda_0}) \bigg) - \bigg( \delta^p((M')^{\lambda \lambda_0})-h^{p}((M')^{\lambda \lambda_0}) \bigg) \notag \\
& \hspace{2.5cm} -h^{p}((M')^{\lambda \lambda_0})+2h^{p-1}((M')^{\lambda \lambda_0}) \\
&= \delta^{p-1}((M')^{\lambda \lambda_0})-\delta^p((M')^{\lambda \lambda_0})+h^{p-1}((M')^{\lambda \lambda_0}) \\
&= h^{p}((M')^{\lambda \lambda_0}) \ \ \textnormal{(according to } \ref{eq2} \textnormal{)}.
\end{align*}

\vspace{0.2cm}

\noindent
We deduce from the isomorphism given in \ref{iso} that $h^p(T^\lambda)=\nu_{\infty,\lambda\lambda_0}^{p}(M)$, then

\vspace{-0.2cm}

$$h^{p}((M')^{\lambda \lambda_0})=\nu_{\infty,\lambda\lambda_0}^{p}(M).$$

\vspace{0.2cm}

\noindent
To sum up, we have

\vspace{-0.2cm}

\begin{equation}
\nu_{\infty,\lambda}^p(\textnormal{MC}_{\lambda_0}(M))=\left\{
\begin{array}{cl}
  \nu_{\infty,\lambda\lambda_0}^{p-1}(M) \ &\textnormal{if } \gamma \in (0,1-\gamma_0) \\[1mm]
  \nu_{\infty,\lambda\lambda_0}^{p}(M) \ &\textnormal{if } \gamma \in (1-\gamma_0,1).
\end{array}
\right.
\end{equation}

\vspace{0.4cm}

\noindent
Let us now take the monodromy filtration $M_\bullet$ into account. We know that it induces a filtration $W_\bullet$ on the cohomology $H^\bullet (K^\bullet)$ defined by $W_\ell H^m(K^\bullet)=\textnormal{Im}(H^m(M_\ell(K^\bullet)) \rightarrow H^m(K^\bullet))$ which verifies $\chi(\textnormal{gr}_\ell^M K^\bullet)=\chi(\textnormal{gr}_\ell^W H^\bullet(K^\bullet))$, by a classical argument of degeneration of spectral sequence (see \cite[Lemma 4.2.6]{Mar18} for more details). Lemma \ref{hj} says that $\textnormal{gr}_\ell^M K^\bullet$ is concentrated in degree $0$, so that the previous equality of Euler characteristics gives $\dim H^0(\textnormal{gr}_\ell^M K^\bullet)=\dim(\textnormal{gr}_\ell^W H^0(K^\bullet))=\dim \textnormal{gr}_\ell^W \psi_{\infty,\lambda}(\textnormal{MC}_{\lambda_0}(M))$. It remains to observe that $W_\bullet$ coincides here with $M_\bullet$, because $W_\bullet$ verifies the two properties which characterize the monodromy filtration \cite[Lemma 3.1.1]{SS19}. The same reasoning holds with primitive parts instead of graded parts, hence we have the equality

\vspace{-0.1cm}

$$\dim \prim_\ell \psi_{\infty,\lambda}(\textnormal{MC}_{\lambda_0}(M))=\dim \bm{H}^1(\mathbb{P}^1,\DR \prim_\ell T^\lambda).$$

\vspace{0.4cm}

\noindent
Now, let us fix $\ell \geq 0$ and consider the Hodge filtration of the complex $\prim_\ell K^\bullet$. \hspace{-2.3mm} As the connection sends the $p$-th piece of the filtration to the $(p-1)$-th piece of the filtration, we have $H^j(\textnormal{gr}_{F}^{p} \prim_\ell K^\bullet)=0$ for $j \neq 0$ and $p \in \mathbb{Z}$ similarly to Lemma \ref{hj}. Using the previous argument again, with the Hodge filtration this time, we have

\vspace{-0.2cm}

$$\nu_{\infty,\lambda,\ell}^p(\textnormal{MC}_{\lambda_0}(M))=\dim  \textnormal{gr}_F^p \prim_\ell \psi_{\infty,\lambda}(\textnormal{MC}_{\lambda_0}(M))=\dim \textnormal{gr}_F^p \bm{H}^1(\mathbb{P}^1,\DR \prim_\ell T^\lambda).$$

\vspace{0.3cm}

\noindent
As $\DR \prim_\ell T^\lambda$ is again of the form $i_* \mathscr{V}$, we can apply the same reasoning as for $T^\lambda$, and get a formula similar to (\ref{eq1}) for $\prim_\ell T^\lambda$:

\vspace{-0.2cm}

\begin{equation}\label{eq1bis}
\nu_{\infty,\lambda,\ell}^p(\textnormal{MC}_{\lambda_0}(M))=\delta^{p-1}(\prim_\ell T^\lambda)-\delta^p(\prim_\ell T^\lambda) - h^p(\prim_\ell T^\lambda) +2 h^{p-1}(\prim_\ell T^\lambda).
\end{equation}

\vspace{0.3cm}

\noindent
On the one hand, we have

\vspace{-0.2cm}

$$\prim_\ell T^\lambda=\prim_\ell (M')^{\lambda \lambda_0} \otimes \left( \C[x',x'^{-1},(x'-1)^{-1}], \textnormal{d}+\gamma_0 \left(-\frac{\textnormal{d}x'}{x'}+\frac{\textnormal{d}x'}{x'-1} \right) \right),$$

\vspace{0.1cm}

\noindent
and on the other hand, we have a formula similar to (\ref{eq2}) with $\prim_\ell (M')^{\lambda \lambda_0}$:

\vspace{-0.2cm}

$$0=\delta^{p-1}(\prim_\ell(M')^{\lambda \lambda_0})-\delta^p(\prim_\ell(M')^{\lambda \lambda_0}) - h^p(\prim_\ell(M')^{\lambda \lambda_0}) + h^{p-1}(\prim_\ell(M')^{\lambda \lambda_0}).$$

\vspace{0.3cm}

\noindent
So we can repeat the reasoning as without the monodromy filtration and get

\vspace{-0.1cm}

$$\nu_{\infty,\lambda,\ell}^p(\textnormal{MC}_{\lambda_0}(M))=\left\{
\begin{array}{cl}
  \nu_{\infty,\lambda\lambda_0,\ell}^{p-1}(M) \ &\textnormal{if } \gamma \in (0,1-\gamma_0) \\[1mm]
  \nu_{\infty,\lambda\lambda_0,\ell}^{p}(M) \ &\textnormal{if } \gamma \in (1-\gamma_0,1).
\end{array}
\right.
$$

\vspace{0.4cm}

\noindent
(Case 2) Let us look at the case $\lambda=1$, which differs from the previous one locally at the neighbourhood of $0$ where we have a minimal extension.

\vspace{0.2cm}

\noindent
We claim that, for every $\ell\geq0$, the primitive Hodge module $\prim_\ell T^1$ decomposes as $\prim^1_\ell T^1\oplus\prim^2_\ell T^1$, where $\prim^2_\ell T^1$ is supported at the origin and $\prim^1_\ell T^1$ is smooth near the origin. Since the question is local at the origin, it is enough to consider the quiver associated to $T^1$ at the origin. By the minimal extension property, it takes the form
\[
\xymatrix{
H \quad \ar@<2pt>[r] & \quad \mathrm{N}(H) \ar@<2pt>[l]
}
\]
where $H$ denotes the nearby cycles of $T^1$ at the origin. As a filtered vector space, it is identified which $(M')_{\gamma_0}$ according to Remark \ref{remarque}.

\vspace{0.3cm}

\noindent
Let us set $G=\mathrm{N}(H)$. The action of the nilpotent operator on $T^1$ is induced by that of $\mathrm{N}$ on $H$ and $G$. Since $\mathrm{N}$ sends $M_\ell H$ to $M_{\ell-1}G$, we deduce that the local quiver of $\prim_\ell T^1$ is
\[
\xymatrix{
\prim_\ell H \quad \ar@<2pt>[r]^-{0} & \quad \prim_\ell G. \ar@<2pt>[l]^-{0}
}
\]

\vspace{0.2cm}

\noindent
We define $\mathrm{P}^2_\ell T^1$ by the quiver
\[
\xymatrix{
0 \quad \ar@<2pt>[r]^-{0} & \quad \prim_\ell G \ar@<2pt>[l]^-{0}
}
\]
and $\mathrm{P}^1_\ell T^1$ is the smooth extension of $\mathrm{P}_\ell T^1$ at the origin, locally defined by the quiver
\[
\xymatrix{
\prim_\ell H \quad \ar@<2pt>[r]^-{0} & \quad 0 \ar@<2pt>[l]^-{0}
}
\]
proving thereby the claim.

\vspace{0.3cm}

\noindent
By its definition, this decomposition underlies a decomposition of pure Hodge modules, so that we have

\vspace{-0.2cm}

\begin{align*}
\nu_{\infty,1,\ell}^p(\textnormal{MC}_{\lambda_0}(M)) &= \dim \textnormal{gr}_F^p \bm{H}^1(\mathbb{P}^1,\DR(\prim_\ell^1 T^1 \oplus \prim_\ell^2 T^1)) \\
&= \dim \textnormal{gr}_F^p \bm{H}^1(\mathbb{P}^1,\DR \prim_\ell^1 T^1)+\dim \textnormal{gr}_F^p \bm{H}^1(\mathbb{P}^1,\DR \prim_\ell^2 T^1),
\end{align*}

\vspace{0.15cm}

\noindent
and, therefore, two dimensions to calculate. The first one can be obtained by considering that $\DR \prim_\ell^1 T^1$ is again of the form $i_* \mathscr{V}$ where $\mathscr{V}$ is a non-constant local system on $\mathbb{P}_{\textnormal{exc}}^1 \setminus \bm{x}$, but with only two singular points ($0$ and $1)$. For each of them, the unique eigenvalue of the monodromy is different from $1$. This gives $H^0(\mathbb{P}^1_{\textnormal{exc}},i_* \mathscr{V})=0$ and $H^2(\mathbb{P}^1_{\textnormal{exc}},i_* \mathscr{V})=0$ by the Poincar\'e duality theorem. Consequently we have

\vspace{-0.2cm}

$$\dim H^1(\mathbb{P}^1_{\textnormal{exc}},i_* \mathscr{V})=-\chi(\mathbb{P}^1_{\textnormal{exc}},i_* \mathscr{V})=-\underbrace{\chi(\mathbb{P}^1_{\textnormal{exc}}\setminus\{0,1\})}_{=0} \rk(\mathscr{V})=0.$$

\vspace{0.05cm}

\noindent
Finally:

\vspace{-0.1cm}

$$\dim \textnormal{gr}_F^p \bm{H}^1(\mathbb{P}^1,\DR \prim_\ell^1 T^1) = 0.$$

\vspace{0.2cm}

\noindent
Let us now try to determine $\dim \textnormal{gr}_F^p \bm{H}^1(\mathbb{P}^1,\DR \prim_\ell^2 T^1)$. We know that $\prim_\ell^2 T^1$ is supported at 0 and given by $(\prim_\ell G)[\partial_{x'}]$. Applying Remark \ref{remarque}, the Hodge filtration is given by

$$F^p((\prim_\ell G)[\partial_{x'}])=\sum_{k \geq 0} \: \partial_{x'}^{k} \cdot F^{p+1+k}(\prim_\ell G).$$

\vspace{0.25cm}

\noindent
We deduce that $\bm{H}^1(\mathbb{P}^1,\DR \prim_\ell^2 T^1)$ is given by the cokernel of

$$(\prim_\ell G)[\partial_{x'}] \overset{\partial_{x'}}{\longrightarrow} (\prim_\ell G)[\partial_{x'}],$$

\vspace{0.35cm}

\noindent
which can be identified with $\prim_\ell G$ equipped with the filtration

$$F^p \bm{H}^1(\mathbb{P}^1,\DR \prim_\ell^2 T^1)=F^p (\prim_\ell G).$$

\vspace{0.4cm}

\noindent
Finally, we have

\vspace{-0.2cm}

\begin{align*}
\dim \textnormal{gr}_F^p \bm{H}^1(\mathbb{P}^1,\DR \prim_\ell^2 T^1) &= \dim \textnormal{gr}_F^p (\prim_\ell G) \\
&= \dim \textnormal{gr}_F^p (\prim_{\ell+1} H) \\
&= \nu_{\infty,\lambda_0,\ell+1}^p(M).
\end{align*}

\vspace{0.25cm}

\noindent
Summing the two dimensions, we get

\vspace{-0.1cm}

$$\nu_{\infty,1,\ell}^p(\textnormal{MC}_{\lambda_0}(M))=0+\nu_{\infty,\lambda_0,\ell+1}^p(M)=\nu_{\infty,\lambda_0,\ell+1}^p(M).$$

\vspace{0.6cm}

\noindent
(Case 3) If $\lambda=\overline{\lambda_0}$, we take again the exact sequence $0 \rightarrow T_0 \rightarrow T^\lambda \rightarrow T_1 \rightarrow 0$ and we have

\vspace{-0.1cm}

$$\nu_{\infty,\bar{\lambda_0}}^p(\textnormal{MC}_{\lambda_0}(M))= \dim \textnormal{gr}_F^p \bm{H}^1(\mathbb{P}_{\textnormal{exc}}^1,\DR T_0)+\dim \textnormal{gr}_F^p \bm{H}^1(\mathbb{P}_x^1,\DR T_1).$$

\vspace{0.4cm}

\noindent
The case of the left term can be treated in the same way as for $\gamma \in (1-\gamma_0,1)$ in Case 1, that gives $\dim \textnormal{gr}_F^p \bm{H}^1(\mathbb{P}_{\textnormal{exc}}^1,\DR T_0)=\nu_{\infty,1}^{p}(M)$. Concerning the right term, we have

\vspace{-0.1cm}

$$\dim \textnormal{gr}_F^p \bm{H}^1(\mathbb{P}_x^1,\DR T_1)=\dim \textnormal{gr}_F^p \bm{H}^1(\mathbb{P}^1,\DR\mathscr{M})=h^p H^1(\A^1,\DR M),$$

\vspace{0.35cm}

\noindent
where $\mathscr{M}$ is the meromorphic extension of $M$ at infinity. By \cite[Lemma 2.2.8 and Remark 2.3.5]{DS13}, we have

\vspace{-0.1cm}

\begin{equation}\label{eq5}
h^p H^1(\A^1,\DR M)=h^p H^1(\mathbb{P}^1,\DR\mathscr{M}^{\textnormal{min}})+\nu_{\infty,1,\textnormal{prim}}^{p-1}(M),
\end{equation}

\vspace{0.2cm}

\noindent
and we get

\vspace{-0.3cm}

\begin{equation}\label{eq6}
\nu_{\infty,\bar{\lambda_0}}^p(\textnormal{MC}_{\lambda_0}(M)) = \nu_{\infty,1}^{p}(M) + h^p H^1(\mathbb{P}^1,\DR\mathscr{M}^{\textnormal{min}}) + \nu_{\infty,1,\textnormal{prim}}^{p-1}(M).
\end{equation}

\vspace{0.3cm}

\noindent
Now, we have seen in the proof of Lemma \ref{hj} that we are in a situation of a minimal extension quiver

\vspace{-0.3cm}

$$\xymatrix{
T^\lambda \quad \ar@<4pt>@{->>}[r]^-{\mathrm{N}} & \quad T_0 \ar@<4pt>@{_{(}->}[l]
},
$$

\vspace{0.1cm}

\noindent
with $\prim_\ell T^\lambda \simeq \prim_{\ell-1} T_0$ for $\ell \geq 1$. As $\mathrm{N}$ is strictly compatible with the Hodge filtration, with a shift $F^\bullet \rightarrow F^{\bullet-1}$, we deduce that $\textnormal{gr}_F^p \prim_\ell T^\lambda \simeq \textnormal{gr}_F^{p-1} \prim_{\ell-1} T_0$ for $\ell \geq 1$, and so

\vspace{-0.2cm}

$$\nu_{\infty,\bar{\lambda_0},\ell}^p(\textnormal{MC}_{\lambda_0}(M)) = \dim \textnormal{gr}_F^{p-1} \bm{H}^1(\mathbb{P}_{\textnormal{exc}}^1,\DR \prim_{\ell-1} T_0)=\nu_{\infty,1,\ell-1}^{p-1}(M).$$

\vspace{0.4cm}

\noindent
It remains to treat the case $\ell=0$, for which we have
$$\nu_{\infty,\bar{\lambda_0}}^p(\textnormal{MC}_{\lambda_0}(M))=\nu_{\infty,\bar{\lambda_0},0}^p(\textnormal{MC}_{\lambda_0}(M))+\sum_{\ell \geq 1} \sum_{k=0}^\ell \nu_{\infty,\bar{\lambda_0},\ell}^{p+k}(\textnormal{MC}_{\lambda_0}(M))$$

\noindent
and

\vspace{-0.5cm}

\begin{align*}
\sum_{\ell \geq 1}\sum_{k=0}^\ell \nu_{\infty,\bar{\lambda_0},\ell}^{p+k}(\textnormal{MC}_{\lambda_0}(M)) &= \sum_{\ell \geq 1} \sum_{k=0}^\ell \nu_{\infty,1,\ell-1}^{p-1+k}(M) \\
&= \sum_{\ell \geq 0} \sum_{k=0}^{\ell} \nu_{\infty,1,\ell}^{p-1+k}(M) + \sum_{\ell \geq 0} \nu_{\infty,1,\ell}^{p+\ell}(M) \\
&= \nu_{\infty,1}^{p-1}(M)+\nu_{\infty,1,\textnormal{coprim}}^{p}(M),
\end{align*}

\vspace{0.3cm}

\noindent
so we deduce that

\vspace{-0.2cm}

\begin{multline*}
\nu_{\infty,\bar{\lambda_0},0}^p(\textnormal{MC}_{\lambda_0}(M))=h^p H^1(\mathbb{P}^1,\DR\mathscr{M}^{\textnormal{min}}) + \nu_{\infty,1,\textnormal{prim}}^{p-1}(M) - \nu_{\infty,1,\textnormal{coprim}}^{p}(M)\\
+ \nu_{\infty,1}^{p}(M) - \nu_{\infty,1}^{p-1}(M).
\end{multline*}

\vspace{0.4cm}

\noindent
Yet, a general calculation \cite[(2.2.5$*$)]{DS13} immediately shows that

\vspace{-0.2cm}

$$\nu_{\infty,1,\textnormal{coprim}}^{p}(M) - \nu_{\infty,1,\textnormal{prim}}^{p-1}(M)=\nu_{\infty,1}^{p}(M) - \nu_{\infty,1}^{p-1}(M),$$

\vspace{0.3cm}

\noindent
and we conclude that $\nu_{\infty,\bar{\lambda_0},0}^p(\textnormal{MC}_{\lambda_0}(M))=h^p H^1(\mathbb{P}^1,\DR\mathscr{M}^{\textnormal{min}})$, which ends the proof of Theorem \ref{theo1}.
\end{proof}

\vspace{0.1cm}

\section{Proof of Theorems \ref{theo2} and \ref{theo3}}

\vspace{0.5cm}

\begin{proof}[Proof of Theorem \ref{theo2}]
Applying identity (2.2.2$**$) of \cite{DS13}, we have

$$h^p(\textnormal{MC}_{\lambda_0}(M))=\sum_{\lambda \in S^1} \nu_{\infty,\lambda}^p(\textnormal{MC}_{\lambda_0}(M)),$$

\vspace{0.3cm}

\noindent
so it suffices to sum expressions from Theorem \ref{theo1} (recall that $\lambda=\exp(-2i\pi\gamma$)):

\begin{align}
\sum_{\lambda \neq 1, \bar{\lambda_0}} \nu_{\infty,\lambda}^p(\textnormal{MC}_{\lambda_0}(M))=\sum_{\gamma \in (0,\gamma_0)} \nu_{\infty,\lambda}^{p}(M) + \sum_{\gamma \in (\gamma_0,1)} \nu_{\infty,\lambda}^{p-1}(M)
\end{align}

\vspace{-0.2cm}

\begin{align}
\nu_{\infty,1}^p(\textnormal{MC}_{\lambda_0}(M)) &= \sum_{\ell \geq 0}\sum_{k=0}^\ell \nu_{\infty,\lambda_0,\ell+1}^{p+k}(M) \\
&= \nu_{\infty,\lambda_0}^{p}(M)-\nu_{\infty,\lambda_0,\textnormal{coprim}}^{p}(M) \notag \\
&= \nu_{\infty,\lambda_0}^{p-1}(M)-\nu_{\infty,\lambda_0,\textnormal{prim}}^{p-1}(M) \notag
\end{align}

\vspace{0.3cm}

\noindent
For $\lambda=\bar{\lambda_0}$, the following equality has already been proved in {\S}\ref{section2}, formulas \eqref{eq5} and \eqref{eq6}:

\vspace{-0.2cm}

\begin{equation}
\nu_{\infty,\bar{\lambda_0}}^p(\textnormal{MC}_{\lambda_0}(M)) = \nu_{\infty,1}^{p}(M) + h^p H^1(\mathbb{A}^1,\DR M)
\end{equation}

\vspace{-0.2cm}

\end{proof}

\vspace{0.2cm}

\begin{proof}[Proof of Theorem \ref{theo3}]
We set $\gamma^p=\delta^p-\delta^{p-1}$. According to identity (2.3.5$*$) of \cite{DS13}, we have

\vspace{-0.2cm}

$$h^p H^1(\mathbb{A}^1,\DR M) = -\gamma^p(M) - h^p(M) + \sum_{i=1}^r \Bigg( \sum_{\mu \neq 1} \mu_{x_i,\mu}^{p-1}(M) + \mu_{x_i,1}^p(M) \Bigg).$$

\vspace{0.2cm}

\noindent
It then follows from Theorem \ref{theo2} that

\vspace{-0.2cm}

\begin{multline}\label{eqdeltap}
h^p(\textnormal{MC}_{\lambda_0}(M)) + h^p(M) = -\gamma^p(M) - \nu_{\infty,\lambda_0,\textnormal{prim}}^{p-1}(M) + \sum_{\gamma \in [0,\gamma_0)} \nu_{\infty,\lambda}^{p}(M) \\
+ \sum_{\gamma \in [\gamma_0,1)} \nu_{\infty,\lambda}^{p-1}(M) + \sum_{i=1}^r \Bigg( \sum_{\mu \neq 1} \mu_{x_i,\mu}^{p-1}(M) + \mu_{x_i,1}^p(M) \Bigg).
\end{multline}

\vspace{0.3cm}

\noindent
According to \cite[Prop. 3.1.1]{DS13}, we have
$$h^p(\textnormal{MC}_{\bar{\lambda_0}}(\textnormal{MC}_{\lambda_0}(M)))=h^{p-1}(M).$$

\noindent
We can write the same formula as above with $\bar{\lambda_0}$ instead of $\lambda_0$, then we apply it to $\textnormal{MC}_{\lambda_0}(M)$ instead of $M$:

\vspace{-0.2cm}

\begin{multline}\label{eq35}
h^p(\textnormal{MC}_{\lambda_0}(M)) +h^{p-1}(M) = -\gamma^p(\textnormal{MC}_{\lambda_0}(M)) - \nu_{\infty,\bar{\lambda_0},\textnormal{prim}}^{p-1}(\textnormal{MC}_{\lambda_0}(M)) \\[2mm]
+ \sum_{\gamma \in [0,1-\gamma_0)} \nu_{\infty,\lambda}^{p}(\textnormal{MC}_{\lambda_0}(M)) + \sum_{\gamma \in [1-\gamma_0,1)} \nu_{\infty,\lambda}^{p-1}(\textnormal{MC}_{\lambda_0}(M)) \\[2mm]
+ \sum_{i=1}^r \Bigg( \sum_{\mu \neq 1} \mu_{x_i,\mu}^{p-1}(\textnormal{MC}_{\lambda_0}(M)) + \mu_{x_i,1}^p(\textnormal{MC}_{\lambda_0}(M)) \Bigg).
\end{multline}

\vspace{0.3cm}

\noindent
It follows from Theorem \ref{theo1} that

\begin{align}\label{eq36}
\sum_{\gamma \in [0,1-\gamma_0)} \nu_{\infty,\lambda}^{p}(\textnormal{MC}_{\lambda_0}(M)) = \sum_{\gamma \in (\gamma_0,1)} \nu_{\infty,\lambda}^{p-1}(M) + \nu_{\infty,1}^{p}(\textnormal{MC}_{\lambda_0}(M)),
\end{align}

\begin{align}\label{eq37}
\sum_{\gamma \in [1-\gamma_0,1)} \nu_{\infty,\lambda}^{p-1}(\textnormal{MC}_{\lambda_0}(M)) = \sum_{\gamma \in (0,\gamma_0)} \nu_{\infty,\lambda}^{p-1}(M) + \nu_{\infty,\bar{\lambda_0}}^{p-1}(\textnormal{MC}_{\lambda_0}(M)).
\end{align}

\vspace{0.4cm}

\noindent
Moreover, according to \cite[Theorem 3.1.2(2)]{DS13}, we have

\vspace{-0.2cm}

\begin{multline}\label{eq38}
\sum_{i=1}^r \Bigg( \sum_{\mu \neq 1} \mu_{x_i,\mu}^{p-1}(\textnormal{MC}_{\lambda_0}(M)) + \mu_{x_i,1}^p(\textnormal{MC}_{\lambda_0}(M)) \Bigg) =\\
\sum_{i=1}^r \Bigg( \sum_{\gamma \in (0,1-\gamma_0)} \mu_{x_i,\lambda}^{p-2}(M) + \sum_{\gamma \in [1-\gamma_0,1]} \mu_{x_i,\lambda}^{p-1}(M) \Bigg).
\end{multline}

\vspace{0.3cm}

\noindent
Substituting \eqref{eq36}, \eqref{eq37} and \eqref{eq38} in Formula \eqref{eq35}, we get:

\vspace{-0.2cm}

\begin{multline}
h^p(\textnormal{MC}_{\lambda_0}(M)) = -\gamma^p(\textnormal{MC}_{\lambda_0}(M)) \underbrace{- h^{p-1}(M) + \sum_{\gamma \neq 0,\gamma_0} \nu_{\infty,\lambda}^{p-1}(M)}_{=-\nu_{\infty,1}^{p-1}(M)-\nu_{\infty,\lambda_0}^{p-1}(M)} \\[2mm]
+ \underbrace{\nu_{\infty,1}^{p}(\textnormal{MC}_{\lambda_0}(M))}_{=\nu_{\infty,\lambda_0}^{p-1}(M)-\nu_{\infty,\lambda_0,\textnormal{prim}}^{p-1}(M)} + \: \nu_{\infty,\bar{\lambda_0}}^{p-1}(\textnormal{MC}_{\lambda_0}(M)) - \nu_{\infty,\bar{\lambda_0},\textnormal{prim}}^{p-1}(\textnormal{MC}_{\lambda_0}(M)) \\[2mm]
+ \sum_{i=1}^r \Bigg( \sum_{\gamma \in (0,1-\gamma_0)} \mu_{x_i,\lambda}^{p-2}(M) + \sum_{\gamma \in [1-\gamma_0,1]} \mu_{x_i,\lambda}^{p-1}(M) \Bigg).
\end{multline}

\vspace{0.3cm}

\noindent
We already made $\nu_{\infty,\bar{\lambda_0}}^{p}(\textnormal{MC}_{\lambda_0}(M))$ explicit in the proof of Theorem \ref{theo2}, but we can remark that

\vspace{-0.2cm}

\begin{align*}
\nu_{\infty,\bar{\lambda_0}}^{p-1}(\textnormal{MC}_{\lambda_0}(M)) - \nu_{\infty,\bar{\lambda_0},\textnormal{prim}}^{p-1}(\textnormal{MC}_{\lambda_0}(M)) &= \sum_{\ell \geq 1} \sum_{k=1}^\ell \nu_{\infty,\bar{\lambda_0},\ell}^{p-1+k}(\textnormal{MC}_{\lambda_0}(M)) \\
&= \sum_{\ell \geq 0} \sum_{k=0}^\ell \nu_{\infty,1,\ell}^{p-1+k}(M) = \nu_{\infty,1}^{p-1}(M).
\end{align*}

\vspace{0.4cm}

\noindent
Finally, we get

\vspace{-0.2cm}

\begin{multline*}
h^p(\textnormal{MC}_{\lambda_0}(M)) = -\gamma^p(\textnormal{MC}_{\lambda_0}(M))- \nu_{\infty,\lambda_0,\textnormal{prim}}^{p-1}(M) \\
+ \sum_{i=1}^r \Bigg( \sum_{\gamma \in (0,1-\gamma_0)} \mu_{x_i,\lambda}^{p-2}(M) + \sum_{\gamma \in [1-\gamma_0,1]} \mu_{x_i,\lambda}^{p-1}(M) \Bigg).
\end{multline*}

\vspace{0.4cm}

\noindent
If we substitute \eqref{eqdeltap} in the previous formula, we have

\vspace{-0.2cm}

\begin{multline*}
\gamma^p(\textnormal{MC}_{\lambda_0}(M)) = \gamma^p(M) + \sum_{\gamma \in [\gamma_0,1)} (\nu_{\infty,\lambda}^{p}(M)-\nu_{\infty,\lambda}^{p-1}(M)) \\
- \sum_{i=1}^r \bigg( (\mu_{x_i,1}^p(M)-\mu_{x_i,1}^{p-1}(M)) + \sum_{\gamma \in (0,1-\gamma_0)} (\mu_{x_i,\lambda}^{p-1}(M)-\mu_{x_i,\lambda}^{p-2}(M)) \bigg).
\end{multline*}

\vspace{0.4cm}

\noindent
Summing these equalities for $p' \leq p$ gives the announced formula.
\end{proof}

\vspace{0.3cm}

\begin{rema}\label{rema}
If we add the assumption that the scalar monodromy at infinity is in fact equal to $\lambda_0 \textnormal{Id}$ as in \cite{DS13}, we have $\nu_{\infty,\lambda,\ell}^{p}(M)=0$ except if $\lambda=\lambda_0$ and $\ell=0$. Thus we have

\vspace{-0.2cm}

$$\sum_{\gamma \in [0,\gamma_0)} \nu_{\infty,\lambda}^{p}(M) + \sum_{\gamma \in [\gamma_0,1)} \nu_{\infty,\lambda}^{p-1}(M) =\nu_{\infty,\lambda_0}^{p-1}(M)=\nu_{\infty,\lambda_0,\textnormal{prim}}^{p-1}(M)=h^{p-1}(M)$$

\vspace{0.3cm}

\noindent
and

\vspace{-0.2cm}

$$\sum_{\gamma \in [\gamma_0,1)} \nu_{\infty,\lambda}^{p}(M)=h^p(M),$$

\vspace{0.3cm}

\noindent
and consequently we retrieve the results 3.1.2(1) and 3.1.2(3) of \cite{DS13}.

\end{rema}

\section*{Acknowledgements}

We thank first of all Claude Sabbah to whom this work owes a lot. The author is indebted as well to Michel Granger and Christian Sevenheck for their careful reading and constructive comments about this work. We also thank Michael Dettweiler for helpful discussions.

\bibliographystyle{amsalpha}
\bibliography{bibli}

\backmatter

\end{document}